\newenvironment{Proof}{\noindent{\bf Proof:}}{\hfill$\square$}
\newtheorem{thm}{Theorem}[section]
\newtheorem{lemma}[thm]{Lemma}
\newtheorem{rmk}[thm]{Remark}
\newtheorem{cor}[thm]{Corollary}
\newtheorem{prop}[thm]{Proposition}
\newtheorem{conj}{Conjecture}
\newcommand{\EPf}{\hbox{}\hfill$\Box$\vspace{.5cm}}
\newcommand{\Z}{{{\mathbb Z}}}
\newcommand{\F}{{{\mathbb F}}}
\newcommand{\T}{{{\mathcal T}}}
\newcommand{\Ss}{{{\mathcal S}}}
\newcommand{\Bz}{{{\mathbf 0}}}
\newcommand{\G}{{{\mathcal G}}}
\date{}
\begin{document}
\title{Cayley graph expanders and groups of finite width}
\author[N.~Peyerimhoff]{Norbert Peyerimhoff}
\author[A.~Vdovina]{Alina Vdovina}

\address[N.~Peyerimhoff]{Department of Mathematical Sciences, Durham
  University, Science Laboratories South Road, Durham, DH1 3LE, UK}
\email{norbert.peyerimhoff@durham.ac.uk}
\urladdr{www.maths.dur.ac.uk/~dma0np/}

\address[A.~Vdovina]{School of Mathematics and Statistics, Newcastle
  University, Newcastle-upon-Tyne, NE1 7RU, UK}
\email{alina.vdovina@ncl.ac.uk}
\urladdr{www.ncl.ac.uk/math/staff/profile/alina.vdovina}

\begin{abstract}
  We present new infinite families of expander graphs of vertex degree
  $4$, which is the minimal possible degree for Cayley graph
  expanders. Our first family defines a tower of coverings (with
  covering indices equals $2$) and our second family is given as
  Cayley graphs of finite groups with very short presentations with
  only $2$ generators and $4$ relations. Both families are based on
  particular finite quotients of a group $G$ of infinite upper
  triangular matrices over the ring ${\rm M}(3,\F_2)$.

  We present explicit vector space bases for the finite abelian
  quotients of the lower exponent-$2$ groups of $G$ by upper
  triangular subgroups and prove a particular $3$-periodicity of
  these quotients. 

  The pro-$2$ completion of the group $G$ satisfies the
  Golod-Shafa\-revich inequality
  $$ |R| \ge \frac{|X|^2}{4}, $$
  it is infinite, not $p$-adic analytic, contains a free nonabelian
  subgroup, but not a free pro-$p$ group.  We also conjecture that the
  group $G$ has finite width $3$ and finite average width $8/3$.
\end{abstract}

\maketitle

\section{Introduction}

The first explicit construction of expander graphs was introduced by
Margulis \cite{Marg1} and was an application of Kazhdan's property
(T). Expanders are simultaneously sparse and highly connected and are
not only of theoretical importance but also useful in computer
science, e.g., for network designs. An extensive survey of this topic
is given in \cite{HLW}.

Cartwright and Steger \cite{CS} constructed infinite sequences of
groups acting simply transitively on the vertices of buildings of type
$\tilde A_n$. Using these groups, Lubotzky, Samuels and Vishne
\cite{LSV} and, independently, Sarveniazi \cite{Sar} presented
explicit constructions of expanders and proved that they are, in fact,
(higher dimensional) Ramanujan complexes.

M. Ershov constructed in \cite{Er} for every sufficiently large prime
$p$ a group violating the Golod-Shafarevich inequality (as
presented in \cite[p. 87]{LS}) and having property (T). Since
those groups have infinitely many $p$-quotients, this fact
provided new families of expanders in connection with pro-$p$ groups.

In this article, we bring many of these aspects together. In Section
\ref{expanders}, we present two new families of Cayley graph
  expanders of vertex degree $4$. One family is given by a tower of
coverings with covering indices equals $2$, and the other family is
given by very short presentations with {\em $2$ generators} and {\em
  only $4$ relations}:

\begin{thm} \label{expshort}
  The groups
  $$ 
  G_k := \langle\, x_0,x_1\, |\, r_1, r_2, r_3, [x_1,\ {}_k\ x_0] \, \rangle, 
  $$ 
  with
  \begin{align*}
    r_1 &= x_1 x_0 x_1 x_0 x_1 x_0 x_1^{-3} x_0^{-3}, \\
    r_2 &= x_1 x_0^{-1} x_1^{-1} x_0^{-3} x_1^2 x_0^{-1} x_1 x_0 x_1,\\
    r_3 &= x_1^3 x_0^{-1} x_1 x_0 x_1 x_0^2 x_1^2 x_0 x_1 x_0,
  \end{align*}
  are finite and the associated Cayley graphs with respect to the
  symmetric set $\{ x_0^{\pm 1}, x_1^{\pm 1} \}$ define an infinite
  family of expanders of vertex degree $4$, satisfying $| G_i | \to
  \infty$.
\end{thm}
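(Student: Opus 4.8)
The plan is to realise each $G_k$ as a finite quotient of the infinite group $G$ and to deduce the three assertions—finiteness of $G_k$, unboundedness of the orders $|G_k|$, and uniform expansion—from the structure of $G$ developed in the earlier sections. First I would use the representation of $G$ inside the upper triangular matrices over $\mathrm{M}(3,\F_2)$ to verify that $x_0,x_1$ map to generators satisfying $r_1,r_2,r_3$, so that each $G_k$ is obtained from $G$ by additionally imposing $[x_1,{}_k x_0]=1$. Since $[x_1,{}_k x_0]=1$ forces $[x_1,{}_{k+1}x_0]=1$, the groups sit in a tower of surjections $\cdots \twoheadrightarrow G_{k+1} \twoheadrightarrow G_k \twoheadrightarrow \cdots$, whose inverse limit is the completion of $G$ cut out by the iterated commutators.

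Finiteness of each $G_k$ I would obtain from the nilpotency built into the presentation: the relation $[x_1,{}_k x_0]=1$ bounds the relevant section of the lower exponent-$2$ central series of $G$, so that, together with the explicit vector-space bases and the $3$-periodicity of the abelian quotients established earlier, each $G_k$ is pinned down as a finite $2$-group of computable order. The same bases yield a closed expression for $|G_k|$ that is strictly increasing in $k$, giving $|G_k|\to\infty$ and reflecting the infiniteness of the pro-$2$ completion of $G$ recorded above.

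The heart of the argument, and the step I expect to be the main obstacle, is the uniform expansion. My approach is to produce a single spectral gap inherited by every quotient, i.e.\ to establish property $(\tau)$ for $G$ with respect to the family $\{N_k\}$ of normal closures of the iterated commutators (ideally property (T) of $G$, in the spirit of the Golod-Shafarevich groups of Ershov cited in the introduction). Given a Kazhdan or $(\tau)$ constant $\kappa>0$, the standard Margulis argument converts it into a lower bound $\lambda_1\big(\mathrm{Cay}(G_k,\{x_0^{\pm1},x_1^{\pm1}\})\big)\ge c\,\kappa^2$, uniform in $k$, on the smallest nonzero Laplacian eigenvalue, hence into a uniform Cheeger constant; combined with $|G_k|\to\infty$ this is precisely the statement that the $4$-regular Cayley graphs form an infinite family of expanders.

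To obtain the spectral gap I would exploit the $3\times 3$ matrix structure over $\F_2$, which points to an action of $G$ on a building of type $\tilde A_2$ in the manner of Cartwright-Steger and Lubotzky-Samuels-Vishne: a Garland-type local-to-global computation, verifying that the links in the associated complex have second eigenvalue bounded away from the critical value $1/2$, would yield the reduced cohomology vanishing underlying property (T), and thus the required uniform gap. The delicate points are identifying the correct geometric model for $G$ from its matrix description, checking that the generating set $\{x_0^{\pm1},x_1^{\pm1}\}$ is compatible with that model, and making the local spectral estimates sufficiently strong; should a direct property (T) proof prove elusive, the fallback is to establish property $(\tau)$ for the single tower $\{G_k\}$, which already suffices and requires only a uniform gap for the quasi-regular representations on $\ell^2(G/N_k)$.
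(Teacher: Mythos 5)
Your treatment of expansion is essentially the paper's: property (T) for $G$ (the links of the complex $\mathcal K$ are incidence graphs of the projective plane of order $2$, so Ballmann--\'Swi\c{a}tkowski/\.Zuk/Pansu apply), followed by the standard fact that Cayley graphs of finite quotients of a Kazhdan group with a fixed generating set have a uniform Cheeger bound. That part is sound, and your fallback to property $(\tau)$ is unnecessary since (T) is already available.

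The genuine gap is in the finiteness step. You claim that imposing the single relation $[x_1,\ {}_k\ x_0]=1$ ``bounds the relevant section of the lower exponent-$2$ series'' and pins $G_k$ down as a finite $2$-group of computable order. This does not follow: killing one iterated commutator does not impose nilpotency or finiteness (in a free group of rank $2$ the analogous quotient is infinite), and Theorem \ref{gammabases} only exhibits the quotients $\lambda_i(G)/(\lambda_i(G)\cap H_{i+1})$ of $G$ itself --- each of rank $2$ or $3$, with $[x_1,\ {}_k\ x_0]$ being only \emph{one} of the basis vectors --- so annihilating that single element says nothing about the rest of the filtration, let alone about the quotient group $G_k$. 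The paper's actual argument is of a completely different nature: $G$ is an $\tilde A_2$-lattice and hence \emph{just infinite} by the normal subgroup theorem, so the quotient by the normal closure of \emph{any} nontrivial element is finite; the only thing to check is that $[x_1,\ {}_k\ x_0]\neq 1$ in $G$, which Proposition \ref{commscheme} gives via $[x_1,\ {}_k\ x_0]=M_k(\alpha_{k+1},\dots)$. Relatedly, your claimed ``closed expression for $|G_k|$'' is not available (and is not what the paper proves); the correct route to $|G_k|\to\infty$ is that $[x_1,\ {}_k\ x_0]\in N_k=G\cap H_k$, so its normal closure lies in $N_k$, $G_k$ surjects onto $G/N_k$, and Corollary \ref{quotgroupest} bounds $[G:N_k]$ from below. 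Without the just-infiniteness input your proof of finiteness does not go through.
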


Each expander graph has twice as many edges as it has vertices. To
prove that our graphs are expanders indeed, we present them as Cayley
graphs of finite quotients of a group $G$ acting cocompactly on an
Euclidean building of type $\tilde A_2$, like in \cite{LSV} or
\cite{Sar}, but our graphs are not Ramanujan for sufficiently many vertices.

The group $G$ is given by $\langle x_0,x_1 \mid r_1, r_2, r_3
\rangle$. The pro-$2$ completion $\widehat G_2$ of $G$ can be
considered as a finitely presented pro-$2$ group. In Section
\ref{outlook}, we discuss particular properties of this pro-$2$ group.
Even though $\widehat G_2$ satisfies the Golod-Shafarevich inequality,
it is still infinite. Lubotzky \cite{Lub1} has shown that $p$-adic
analytic groups satisfy the Golod-Shafarevich inequality. The group
$\widehat G_2$, however, is not $p$-adic analytic. Wilson \cite{Wi}
conjectured that discrete, resp., pro-$p$ groups violating the
Golod-Shafarevich inequality have free subgroups, resp., free
pro-$p$ subgroups of rank two. The first conjecture was later proved
in \cite{WZ} and the proof of the second conjecture can be found in
\cite[p. 224]{Zel}. The group $\widehat G_2$ satisfies this
inequality, doesn't contain a free pro-$2$ subgroup, but it
nevertheless contains a free subgroup of rank two.

Our considerations are based on a linear representation of the group
$G$ by infinite upper unitriangular matrices over the field
$\F_2$. It is particularly useful to view these infinite matrices as
been built up by diagonals of $3 \times 3$ block matrices. Natural
normal subgroups $H_i$ are given by infinite upper triangular matrices
with vanishing first $i$ upper diagonals. Let 
$$ G = \lambda_0(G) \ge \lambda_1(G) \ge \cdots $$
denote the lower exponent-$2$ series of $G$. Obviously, we have
$\lambda_i(G) \le G \cap H_i$. In Theorem
\ref{gammabases} of Section \ref{main}, we prove a particular
{\em $3$-periodicity} for the abelian quotients $\lambda_i(G) /
(\lambda_i(G) \cap H_{i+1})$ and derive {\em explicit bases} for them. These
considerations give useful information about the structure of our
families of Cayley graph expanders.

Computer calculations show for the group $G$ the identities
$\lambda_i(G) = G \cap H_i$ from $i \ge 1$ onwards and $\lambda_i(G) /
\lambda_{i+1}(G) \cong \gamma_i(G) / \gamma_{i+1}(G)$ from $i \ge 2$
onwards, up to the index $i = 100$. Here, $\gamma_i(G)$ denote the
lower central series groups of $G$. If these identities are true for
all $i$-indices, then our group $G$ has {\em finite width $3$} and
finite average width $8/3$, and the {\em covering indices} of our
expander graphs $\G_i$ are given by the $3$-periodic sequence $4,8,\,
\overline{4,8,8}$.

The group $G$ is a subgroup of a group $\Gamma$ belonging to a class
of groups $\Gamma_\T$, which were constructed in \cite[Section]{CMSZ}
and are related to particular triangle presentations $\T$ of special
finite projective planes of prime power order $q$. We expect that
analogous {\em finite width properties} hold also for these groups
$\Gamma_\T$ (see Conjecture \ref{conjgt} in Section
\ref{outlook}). Again, this conjecture has been checked by computer
for many $i$-indices for the prime powers $q=2,4,5,7,9,11$.

\medskip

{\bf Acknowledgement:} The authors are grateful to M. Belolipetsky,
A. Borovik, M. du Sautoy, R. Grigorchuk, M. Kontsevich,
Ch. R. Leedham-Green, A. Panchishkin, M. Sapir and P. Zalesskii for many helpful
discussions.

\section{Commutator schemes of the group $G$}
\label{main}

Our group $G$ is a fundamental group of a simplicial complex $\mathcal K$,
consisting of $14$ triangles, and defined by the labeling scheme in
Figure \ref{labelscheme}. Straightforward calculations give the following
presentation of this group:
\begin{align} \label{r1r2r3}
r_1 &= x_1 x_0 x_1 x_0 x_1 x_0 x_1^{-3} x_0^{-3},\nonumber \\
r_2 &= x_1 x_0^{-1} x_1^{-1} x_0^{-3} x_1^2 x_0^{-1} x_1 x_0 x_1,\\
r_3 &= x_1^3 x_0^{-1} x_1 x_0 x_1 x_0^2 x_1^2 x_0 x_1 x_0.\nonumber 
\end{align}

\begin{figure}[h]
  \begin{center}      
      \psfrag{x0}{$x_0$}
      \psfrag{x1}{$x_1$}
      \psfrag{x2}{$x_2$}
      \psfrag{x3}{$x_3$}
      \psfrag{x4}{$x_4$}
      \psfrag{x5}{$x_5$}
      \psfrag{x6}{$x_6$}
      \psfrag{y0}{$y_0$}
      \psfrag{y1}{$y_1$}
      \psfrag{y2}{$y_2$}
      \psfrag{y3}{$y_3$}
      \psfrag{y4}{$y_4$}
      \psfrag{y5}{$y_5$}
      \psfrag{y6}{$y_6$}
    \includegraphics[height=3.8cm]{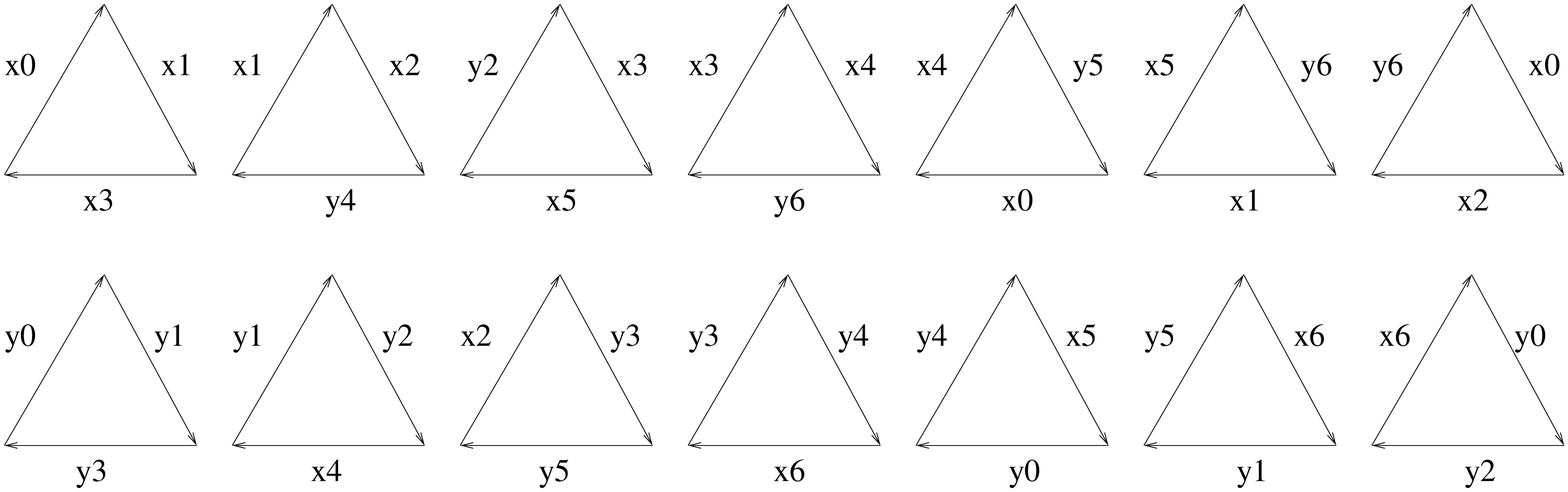}
  \end{center}
   \caption{Labeling scheme for our simplicial complex}
   \label{labelscheme}
\end{figure}

This complex is a $2$-fold cover of a classifying space of the group 
\begin{equation} \label{G2} 
  \Gamma := \langle x_0,\dots,x_6 | x_i
  x_{i+1} x_{i+3}\ {\mathrm{for}}\ i = 0,1,\dots,6 \rangle,
\end{equation}
where $i,i+1,i+3$ are taken mod $7$. The group $\Gamma$ belongs to the
family of groups introduced in \cite[Section 4]{CMSZ}.

\begin{prop}
  The group $\Gamma$ in \eqref{G2} is generated by $x_0,x_1,x_2$ and
  the subgroup $G$ generated by $x_0,x_1$ is an index two normal
  subgroup of $\Gamma$.
\end{prop}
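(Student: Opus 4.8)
The plan is to eliminate four of the seven generators, bound the index from below by a homomorphism to $\Z/2\Z$, and reduce the matching upper bound to three explicit facts about conjugation by $x_2$, which I then extract from the relations of \eqref{G2} that survive the elimination. For the generation statement, the defining relation with index $i$ reads $x_i x_{i+1} x_{i+3} = 1$, and solving the relations with $i = 0,1,2,3$ for the generator of largest index gives $x_3 = x_1^{-1} x_0^{-1}$, $x_4 = x_2^{-1} x_1^{-1}$, $x_5 = x_0 x_1 x_2^{-1}$ and $x_6 = x_1 x_2 x_0 x_1$. Hence $x_3,x_4,x_5,x_6 \in \langle x_0,x_1,x_2\rangle$, so $\Gamma = \langle x_0,x_1,x_2\rangle$. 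Substituting these expressions into the three remaining relations ($i = 4,5,6$) turns them into relations in $x_0,x_1,x_2$ alone; call these $(R_4),(R_5),(R_6)$.

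Next I would record the lower bound $[\Gamma:G]\ge 2$ by exhibiting a surjection $\phi\colon \Gamma \to \Z/2\Z$. Setting $\phi(x_i)=a_i$ with $(a_0,\dots,a_6)=(0,0,1,0,1,1,1)$ respects every relation $a_i+a_{i+1}+a_{i+3}=0$ in $\Z/2\Z$, so $\phi$ is well defined; it is onto since $\phi(x_2)=1$, and $G\le\ker\phi$ since $\phi(x_0)=\phi(x_1)=0$. For the upper bound I would apply the Reidemeister--Schreier method with the Schreier transversal $\{1,x_2\}$ for $G$ in $\Gamma=\langle x_0,x_1,x_2\rangle$: the pair $\{1,x_2\}$ is stable under right multiplication by the generators as soon as the three Schreier generators
$$ x_2^2, \qquad x_2 x_0 x_2^{-1}, \qquad x_2 x_1 x_2^{-1} $$
all lie in $G$. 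Equivalently, it suffices to show $x_2^2\in G$ together with the statement that $x_2$ normalises $G$; since index-two subgroups are automatically normal, normality of $G$ then follows for free.

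The main obstacle is the membership $x_2^2\in G$, since no single relation displays it and the three relations must be combined. Relation $(R_5)$ solves to $x_2^{-1} x_1 x_2 = x_1^{-1} x_0^{-1} x_1^{-2} x_0^{-1}\in G$, so conjugation by $x_2$ already carries $x_1$ into $G$. Relation $(R_4)$ rearranges to $x_2 x_0^{-1} x_2 = x_1^{-1} x_0 x_1\in G$, which I would rewrite as $x_2^2 = (x_2 x_0^{-1} x_2)(x_2^{-1} x_0 x_2)$, reducing the whole problem to showing $x_2^{-1} x_0 x_2\in G$. To get this I would combine $(R_4)$ and $(R_6)$ into a relation of the form $x_2^{-1}(x_1^{-1} x_0 x_1^2) x_2\in G$, expand its left-hand side as the telescoping product $(x_2^{-1} x_1 x_2)^{-1}(x_2^{-1} x_0 x_2)(x_2^{-1} x_1 x_2)^2$, and substitute the known value of $x_2^{-1} x_1 x_2$ from $(R_5)$; solving for $x_2^{-1} x_0 x_2$ then places it in $G$. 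Feeding this back through $(R_4)$ gives $x_2^2\in G$ and the required normalisation, so $G=\ker\phi$ and $[\Gamma:G]=2$, with normality automatic. The only delicate point is the bookkeeping in this last combination of $(R_4),(R_5),(R_6)$; everything else is routine substitution.
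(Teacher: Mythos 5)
Your proposal is correct, and it reaches the conclusion by a genuinely different route from the paper. Both arguments begin identically, eliminating $x_3,\dots,x_6$ via the relations (your $x_6=x_1x_2x_0x_1$ comes from the relation with $i=3$, the paper's $x_6=(x_0x_2)^{-1}$ from $i=6$; both are fine). From there the paper establishes the three relations $x_2x_1x_2=x_0^{-1}x_1^{-1}x_0^{-1}$, $x_2x_0^{-1}x_2=x_1^{-1}x_0x_1$ and $x_2^2=x_0^{-1}x_1x_0x_1$ by van Kampen diagrams over the original $14$-triangle presentation, and then proves $[\Gamma:G]\le 2$ by an induction on word length, pushing a leading $x_2$ rightwards through any word in $x_0,x_1$. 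You instead run a Reidemeister--Schreier/coset-closure argument with transversal $\{1,x_2\}$, reducing everything to the memberships $x_2^2,\,x_2^{-1}x_0x_2,\,x_2^{-1}x_1x_2\in G$, which you extract algebraically from the Tietze-reduced relations. I checked that your one ``delicate'' step closes: $(R_4)$ gives $x_2^{-1}(x_1^{-1}x_0x_1^2)x_2=x_0^{-1}(x_2x_1x_2)$, and $(R_6)$, solved for its trailing $x_2$ and substituted into the leading $x_2$, gives $x_2x_1x_2=x_0^{-1}x_1^{-1}x_0^{-1}$ directly; hence $x_2^{-1}(x_1^{-1}x_0x_1^2)x_2=x_0^{-2}x_1^{-1}x_0^{-1}\in G$, and your telescoping identity $u^{-1}(x_2^{-1}x_0x_2)u^2$ with $u=x_2^{-1}x_1x_2\in G$ from $(R_5)$ then yields $x_2^{-1}x_0x_2\in G$ and so $x_2^2\in G$. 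Your approach buys two things the paper's written proof does not deliver: it replaces the diagrammatic verification and the length induction by purely mechanical relation manipulation, and, via the homomorphism $\phi\colon\Gamma\to\Z/2\Z$ with $(a_0,\dots,a_6)=(0,0,1,0,1,1,1)$ (which I verified respects all seven relations, since two distinct lines of the Fano plane meet in one point), it supplies the lower bound $[\Gamma:G]\ge 2$ explicitly --- the paper's algebraic argument only shows the index is at most $2$ and defers the exact value to a covering-space argument.
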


Even though $[\Gamma : G] = 2$ follows from covering arguments, we give
a different proof for this fact as well.

\smallskip

\begin{Proof}
  The relations imply that $x_3 = (x_0 x_1)^{-1}$, $x_4 = (x_1
  x_2)^{-1}$, $x_5 = x_0 x_1 x_2^{-1}$, $x_6 = (x_0 x_2)^{-1}$, so
  $\Gamma$ is generated by $x_0, x_1, x_2$. The diagrams in Figure
  \ref{diagrel} show the validity of the three relations $x_2 x_1 x_2
  = x_0^{-1} x_1^{-1} x_0^{-1}$, $x_2 x_0^{-1} x_2 = x_1{-1} x_0 x_1$
  and ${x_2}^2=x_0^{-1} x_1 x_0 x_1$ in $\Gamma$.

  \begin{figure}[h]
    \begin{center}
      \psfrag{0}{$x_0$}
      \psfrag{1}{$x_1$}
      \psfrag{2}{$x_2$}
      \psfrag{3}{$x_3$}
      \psfrag{4}{$x_4$}
      \psfrag{5}{$x_5$}
      \psfrag{6}{$x_6$}
      \includegraphics[height=4cm]{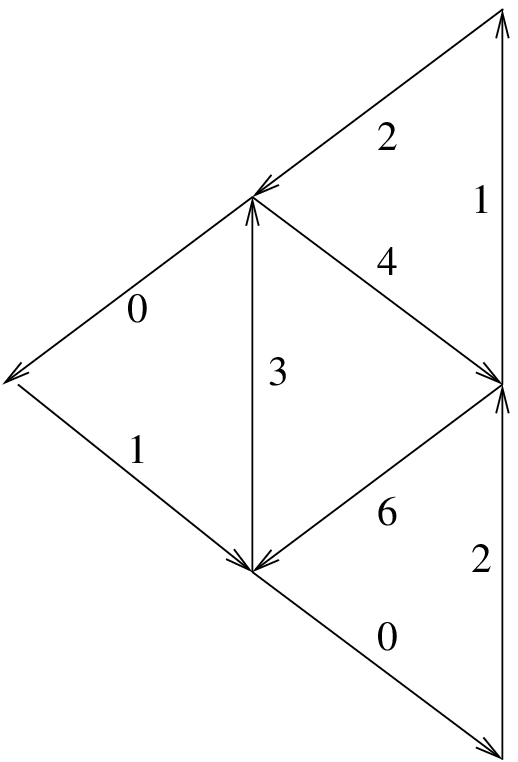}\hspace{2cm}
      \includegraphics[height=3cm]{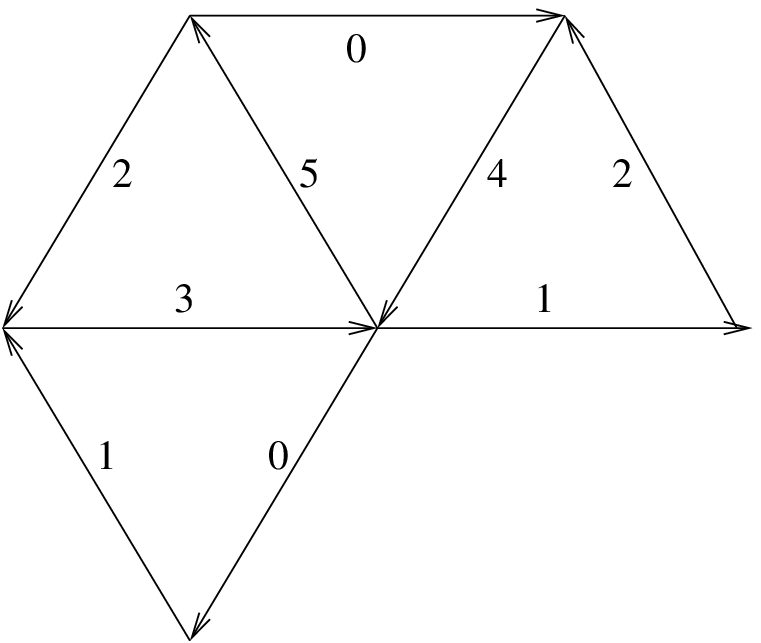}\\[.5cm]
      \includegraphics[height=1.5cm]{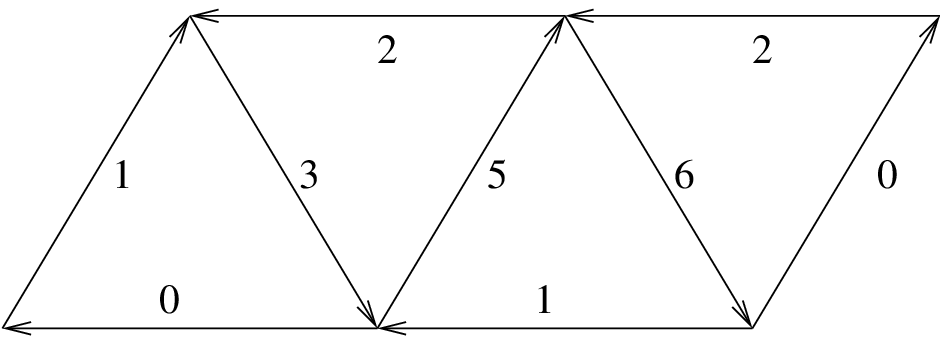}
   \end{center}
   \caption{Diagram for relations in $G_2$}
   \label{diagrel}
  \end{figure}

  It remains to prove that every element of $\Gamma$ can be written as
  $w$ or $w x_2$, where $w$ is a word in $x_0, x_1$. By relation
  ${x_2}^2=x_0^{-1} x_1 x_0 x_1$, it suffices to prove that any
  reduced word $x_2 w$ with $w$ only containing $x_0, x_1$ and being
  of length $n$ can be rewritten as $w_1 x_2 w_2$ with $w_1, w_2$ only
  containing $x_0, x_1$ and $w_2$ being of length $\le n-1$. Assume
  that $w = x_0 w'$. (The other cases $w = x_0^{-1} w'$, $w = x_1 w'$
  and $w = x_1^{-1} w'$ are treated similarly.) Then we have, using
  the above three relations:
  \begin{eqnarray*}
    x_2 x_0 w' &=& x_2^2 (x_2^{-1} x_0 x_2^{-1}) x_2 w' \\
    &=& x_2^2 (x_1^{-1} x_0^{-1} x_1) x_2 w' \\
    &=& (x_0^{-1} x_1 x_0 x_1) x_1^{-1} x_0^{-1} x_1 x_2 w' \\
    &=& x_0^{-1} x_1^2 x_2 w' = w_1 x_2 w_2,
  \end{eqnarray*}
  with $w_1 = x_0^{-1} x_1^2$ and $w_2 = w'$.
\end{Proof}

\medskip

Now, we employ a particular linear representation of $\Gamma$ in the
matrix group ${\rm GL}(9,\F_2(y))$ given in \cite{LSV} (note that
the $b_i$ in \cite[Section 10]{LSV} correspond to our $x_i^{-1}$):
{\small
\begin{eqnarray*}
x_0 &=& \begin{pmatrix} 1 & 0 & 0 & 0 & 0 & 0 & 0 & 0 & 0 \\
                        0 & 1 & 0 & 0 & 0 & 1 & 0 & 1 & 0 \\
                        0 & 0 & 1 & 0 & 1 & 1 & 0 & 0 & 1 \\
                        0 & 0 & 0 & 1 & 0 & 0 & 0 & 0 & 0 \\
                        0 & 0 & 0 & 0 & 1 & 0 & 0 & 0 & 1 \\
                        0 & 0 & 0 & 0 & 0 & 1 & 0 & 1 & 1 \\
                        0 & 0 & 0 & 0 & 0 & 0 & 1 & 0 & 0 \\
                        0 & 0 & 0 & 0 & 0 & 0 & 0 & 1 & 0 \\
                        0 & 0 & 0 & 0 & 0 & 0 & 0 & 0 & 1
        \end{pmatrix} + \frac{1}{y}
        \begin{pmatrix} 0 & 0 & 0 & 0 & 0 & 0 & 0 & 0 & 0 \\
                        0 & 1 & 1 & 0 & 0 & 1 & 0 & 1 & 0 \\
                        0 & 1 & 0 & 0 & 1 & 1 & 0 & 0 & 1 \\
                        0 & 0 & 0 & 0 & 0 & 0 & 0 & 0 & 0 \\
                        0 & 1 & 0 & 0 & 1 & 1 & 0 & 0 & 1 \\
                        0 & 0 & 1 & 0 & 1 & 0 & 0 & 1 & 1 \\
                        0 & 0 & 0 & 0 & 0 & 0 & 0 & 0 & 0 \\
                        0 & 0 & 1 & 0 & 1 & 0 & 0 & 1 & 1 \\
                        0 & 1 & 1 & 0 & 0 & 1 & 0 & 1 & 0
        \end{pmatrix},\\[.2cm]
x_1 &=& \begin{pmatrix} 1 & 0 & 0 & 0 & 0 & 0 & 0 & 0 & 0 \\
                        0 & 1 & 0 & 0 & 1 & 0 & 1 & 1 & 1 \\
                        0 & 0 & 1 & 1 & 1 & 1 & 0 & 1 & 1 \\
                        0 & 0 & 0 & 1 & 0 & 0 & 0 & 1 & 1 \\
                        0 & 0 & 0 & 0 & 1 & 0 & 1 & 0 & 0 \\
                        0 & 0 & 0 & 0 & 0 & 1 & 0 & 0 & 0 \\
                        0 & 0 & 0 & 0 & 0 & 0 & 1 & 0 & 0 \\
                        0 & 0 & 0 & 0 & 0 & 0 & 0 & 1 & 0 \\
                        0 & 0 & 0 & 0 & 0 & 0 & 0 & 0 & 1
        \end{pmatrix} + \frac{1}{y}
        \begin{pmatrix} 0 & 0 & 0 & 0 & 0 & 0 & 0 & 0 & 0 \\
                        0 & 1 & 1 & 0 & 1 & 0 & 1 & 1 & 1 \\
                        0 & 1 & 0 & 1 & 1 & 1 & 0 & 1 & 1 \\
                        0 & 1 & 0 & 1 & 1 & 1 & 0 & 1 & 1 \\
                        0 & 0 & 1 & 1 & 0 & 1 & 1 & 0 & 0 \\
                        0 & 0 & 0 & 0 & 0 & 0 & 0 & 0 & 0 \\
                        0 & 1 & 0 & 1 & 1 & 1 & 0 & 1 & 1 \\
                        0 & 0 & 1 & 1 & 0 & 1 & 1 & 0 & 0 \\
                        0 & 1 & 0 & 1 & 1 & 1 & 0 & 1 & 1
        \end{pmatrix}.
\end{eqnarray*}
}
We have $x_0 = A_0 + \frac{1}{y} A_1$ and $x_1 = B_0 + \frac{1}{y}
B_1$ with $9 \times 9$ matrices $A_0,A_1,B_0,B_1 \in {\rm M}(9,\F_2)$, and
their inverses $x_0^{-1}, x_1^{-1}$ are of the same form. Therefore,
an arbitrary group element $x \in G$ is of the form
$$ x = C_0 + \sum_{j=1}^k \frac{1}{y^j} C_j, $$
which we identify with the (finite band) upper triangular infinite
Toeplitz matrix
\begin{equation} \label{xrep}
x = \begin{pmatrix} C_0 & C_1 & C_2 & \dots & C_k & 0 & 0 & \dots \\
  0 & C_0 & C_1 & \dots & C_{k-1} & C_k & 0 & \ddots \\ 0 & 0 & C_0 &
  \dots & C_{k-2} & C_{k-1} & C_k & \ddots \\ \vdots & \ddots & \ddots
  & \ddots & \ddots & \ddots & \ddots & \ddots \end{pmatrix},
\end{equation}
where each $C_i$ is a matrix in ${\rm M}(9,\F_2)$. One checks that
multiplication of elements in ${\rm GL}(9,\F_2[1/y])$ and of the
corresponding infinite matrices is consistent.

For a more detailed analysis it is useful to rewrite the matrix
representation \eqref{xrep} of an arbitrary element $x \in G$ with the
help of $3 \times 3$ matrices. This requires some notation.

Let $\Ss$ denote the vector space of all $3 \times 9$ matrices over
$\F_2$, i.e, every $a \in \Ss$ is of the form
$$ a = \begin{pmatrix} a(1) & a(2) & a(3) \end{pmatrix} $$
with $3 \times 3$-matrices $a(i)$ over $\F_2$. Let $\Bz$ denote the
zero matrix in $\Ss$. For every (finite or infinite) sequence
$a_1,a_2,\dots \in \Ss$ let $M(a_1,a_2,\dots)$ denote the following
infinite matrix: All lower diagonals of size $3 \times 3$ are zero,
the main diagonal of size $3 \times 3$ consists only of identity
matrices and the $i$-th upper diagonal of size $3 \times 3$ has the
$3$-periodic entries $a_i(1),a_i(2),a_i(3),a_i(1),a_i(2),a_i(3),\dots$
for $i \ge 1$.  In the case of a finite sequence $a_1,\dots,a_p$, all
diagonals above the $p$-th diagonal of size $3 \times 3$ are also
chosen to be zero. The set of all such infinite matrices has an
obvious group structure and is denoted by $H$. For $i \ge 0$, let
$M_i(a_1,a_2,\dots)$ denote the matrices
$M(\underbrace{0,0,\dots,0}_{i\, \text{times}},a_1,a_2,\dots)$ and let
$H_i$ denote the normal subgroup of $H$ consisting of all those
matrices. Let 
$$ G = \lambda_0(G) \ge \lambda_1(G) \ge \lambda_2(G) \ge \cdots $$
be the lower exponent-$2$ series of $G$, i.e., $\lambda_{i+1}(G) =
[\lambda_i(G),\lambda_i(G)] \lambda_i(G)^2$ for $i \ge 0$. Note that
$\lambda_i(G) \le G \cap H_i$. With this notation, we have {\small
\begin{eqnarray}
  x_0 = M(a_1,\dots,a_5), \quad a_1 &=&
    \begin{pmatrix} 0 & 0 & 0 & 0 & 0 & 0 & 0 & 0 & 0 \\
                    0 & 0 & 1 & 0 & 0 & 1 & 0 & 0 & 1 \\
                    0 & 1 & 1 & 0 & 1 & 1 & 0 & 1 & 1 \end{pmatrix},
    \label{x0}
    \\[.2cm]
  x_1 = M(b_1,\dots,b_5), \quad b_1 &=&
    \begin{pmatrix} 0 & 0 & 0 & 0 & 1 & 1 & 0 & 1 & 0 \\
                    0 & 1 & 0 & 1 & 0 & 0 & 0 & 0 & 1 \\
                    1 & 1 & 1 & 0 & 0 & 0 & 0 & 1 & 0 \end{pmatrix}.
    \label{x1}
\end{eqnarray}
}

For our next result, we use the following shorthand notation for
higher commutators:
$$
[x,\ {}_n \ y,\, z] :=
[x,\underbrace{y,\dots,y}_{n},z].
$$

\begin{thm} \label{gammabases} 
  We have the following $3$-periodicity for the abelian quotients
  $\lambda_i(G)/(\lambda_i(G) \cap H_{i+1})$ for $i \ge 2$:
  $$
  [ \lambda_i(G) : (\lambda_i(G) \cap H_{i+1}) ] =
  \begin{cases} 8, & \text{if $i \equiv 0,1 \mod 3$,}\\ 4, & \text{if
      $i \equiv 2 \mod 3$,} \end{cases}
  $$
  and a basis of $\lambda_i(G) / (\lambda_i(G) \cap H_{i+1})$ (as
  vector space over $\F_2$) is given by
  \begin{eqnarray*}
    \{ [x_1,\ {}_i \ x_0], [x_1,\ {}_{i-2} \ x_0,x_1,x_0], [x_1,\ {}_{i-2} 
    \ x_0,x_1,x_1] \}, 
    & \text{if $i \equiv 1 \mod 3$,} \\
    \{ [x_1,\ {}_i \ x_0], [x_1,\ {}_{i-1} \ x_0,x_1] \}, 
    & \text{if $i \equiv 2 \mod 3$,} \\
    \{ [x_1,\ {}_i \ x_0], [x_1,\ {}_{i-1} \ x_0,x_1],
    [x_1,\ {}_{i-2} \ x_0,x_1,x_1] \}, & \text{if $i \equiv 0 \mod 3$,}
  \end{eqnarray*}
  where each commutator $[x_i,x_j,\dots ]$ above is an abbreviation
  for the left coset $[x_i,x_j,\dots ] (\lambda_i(G) \cap H_{i+1})$.
\end{thm}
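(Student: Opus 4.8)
The plan is to convert the statement into a finite piece of linear algebra over $\F_2$ on the ``leading diagonal'' space $\Ss$, and then to exploit a cyclic-shift symmetry to produce the $3$-periodicity. For each $i\ge 1$ every element of $\lambda_i(G)$ lies in $H_i$, so its first possibly-nonzero diagonal of $3\times 3$ blocks is the $(i+1)$-st; this diagonal is $3$-periodic and hence recorded by a single element of $\Ss$. Reading it off defines a map $\varphi_i\colon\lambda_i(G)\to H_i/H_{i+1}\cong\Ss$. Since for unitriangular matrices the leading diagonal of a product is the sum of the leading diagonals, $\varphi_i$ is a homomorphism into the additive group $(\Ss,+)\cong\F_2^{27}$ with kernel exactly $\lambda_i(G)\cap H_{i+1}$. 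Thus $\lambda_i(G)/(\lambda_i(G)\cap H_{i+1})\cong V_i:=\varphi_i(\lambda_i(G))\le\Ss$, the index equals $2^{\dim_{\F_2}V_i}$, and the task reduces to computing $V_i$ and a basis. As $\lambda_i$ is built from squares and commutators, $V_i$ depends only on the matrix group generated by $x_0,x_1$, so the relations $r_1,r_2,r_3$ enter solely through the explicit leading data $a_1,b_1$ of \eqref{x0} and \eqref{x1}.

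Next I would describe how brackets act on leading diagonals. Block multiplication of the infinite Toeplitz matrices induces on $\Ss$ a bilinear leading product: for $c$ on diagonal $m$ and $d$ on diagonal $n$ one sets $(c\star_m d)(r)=c(r)\,(\sigma^m d)(r)$, where $\sigma(a(1),a(2),a(3))=(a(2),a(3),a(1))$ is the cyclic shift forced by $3$-periodicity and the product is of $3\times 3$ matrices. Over $\F_2$ the commutator bracket is $\beta_{m,n}(c,d)=c\star_m d+d\star_n c$, and the square of $c$ on diagonal $m$ is $c\star_m c$ on diagonal $2m$. The crucial point is $\sigma^3=\mathrm{id}$, so all of these depend on the diagonal index only modulo $3$, which is the source of the $3$-periodicity. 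For $i\ge 1$ a square $g^2$ with $g\in\lambda_i(G)$ lands on diagonal $2(i+1)>i+2$ and is killed by $\varphi_{i+1}$; hence for $i\ge 1$ only commutators survive and I obtain the purely linear recursion $V_{i+1}=\beta_{i+1,1}(V_i,a_1)+\beta_{i+1,1}(V_i,b_1)$, the seed being $\varphi_1([x_1,x_0])=\beta_{1,1}(b_1,a_1)$ (in $V_1$ the squares $a_1\star_1 a_1$, $b_1\star_1 b_1$ also contribute).

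Finally I would iterate. The images under $\varphi_i$ of the listed commutators $[x_1,{}_i\,x_0]$, $[x_1,{}_{i-1}\,x_0,x_1]$, $[x_1,{}_{i-2}\,x_0,x_1,x_1]$, $[x_1,{}_{i-2}\,x_0,x_1,x_0]$ are obtained by applying $\beta_{\bullet,1}(\,\cdot\,,a_1)$ and $\beta_{\bullet,1}(\,\cdot\,,b_1)$ to the seed, so each is an explicit vector in $\F_2^{27}$. Anchoring at $V_2$, the recursion for $i\ge 2$ is pure commutator, and for each residue $i\bmod 3$ I would check by direct $\F_2$-linear algebra that the listed set is linearly independent and that the two bracket maps send it to the next residue's list together with vectors already in that span. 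Because $\sigma^3=\mathrm{id}$, it suffices to verify the three transition types $2\to 0$, $0\to 1$, $1\to 2$ (i.e. $i=2,3,4$), after which the cycle reproduces itself; this simultaneously shows $V_{i+1}$ does not grow and that the period closes. The independence then gives $\dim V_i=3$ for $i\equiv 0,1$ and $\dim V_i=2$ for $i\equiv 2$, i.e. indices $8$ and $4$.

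The main obstacle is the spanning and dimension bound, not membership: it is immediate that the listed commutators lie in $V_i$, but one must prove they exhaust it. Concretely, the $V_1\to V_2$ step must be controlled, since $V_1$ carries the square-seeds $a_1\star_1 a_1$ and $b_1\star_1 b_1$ in addition to $\beta_{1,1}(b_1,a_1)$, and one has to show their forward brackets lie in the span of the two claimed weight-$3$ commutators; likewise, in the inductive cycle one must rule out a spurious fourth bracket direction appearing at any step (a priori each step applies two linear maps and could double the dimension). This is the one place where the explicit numerical values of $a_1,b_1$ are indispensable, and the $3$-periodicity is exactly what reduces an infinite family of rank computations to the verification of a single length-$3$ cycle of linear maps on a space of dimension at most $3$.
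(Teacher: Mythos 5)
Your proposal is correct and follows essentially the same route as the paper: your leading-diagonal homomorphism $\varphi_i$ and the bilinear bracket on $\Ss$ depending on the diagonal index only mod $3$ are exactly Proposition \ref{abc} (proved in the Appendix), your subspaces $V_i$ cycling through a period-$3$ orbit are the paper's $\Ss_2\to\Ss_3\to\Ss_1\to\Ss_2$ spanned by the named elements $\alpha_j,\beta_j,\gamma_j$, and your single length-$3$ cycle of explicit $\F_2$-linear checks (including the seed $V_1$ carrying the squares $x_0^2,x_1^2$) is precisely Proposition \ref{commscheme} together with the induction $k\to k+1$ in the paper. The only remaining content in both treatments is the deferred finite computation of the transition maps, which you correctly identify as the place where the explicit values of $a_1,b_1$ must be used to rule out extra directions.
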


\begin{rmk} \label{gammabases01}
  To complete the picture for $i=0,1$, we have
  $$
  [ G : (G \cap H_1) ] = 4 \quad \text{and} \ \ [ \lambda_1(G) :
  (\lambda_1(G) \cap H_2) ] = 8
  $$ 
  with basis $\{ x_0, x_1 \}$ in the first case and $\{ x_0^2, x_1^2,
  [x_1,x_0] \}$ in the second case. Note that the periodicity of the
  quotients $\lambda_i(G) / (\lambda_i(G) \cap H_{i+1})$ starts at
  $i=2$.
\end{rmk}

Before we start with the proof of Theorem \ref{gammabases}, let us state
an immediate consequence.

\begin{cor} \label{quotgroupest}
  We have
  $$
  [ G : (G \cap H_{3i+j}) ] \ge \begin{cases} 2^2, & \text{if $(i,j)
      = (0,1)$,}\\ 
  2^5, & \text{if $(i,j)=(0,2)$,}\\
  2^{8i-1+\mu(j)}, & \text{if $i \ge 1$ and $j \in \{0,1,2\}$,} \end{cases}
  $$
  where $\mu(0)= 0$, $\mu(1) = 3$ and $\mu(2) = 6$. 
\end{cor}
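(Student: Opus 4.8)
The plan is to deduce the bound from Theorem \ref{gammabases} and Remark \ref{gammabases01} through a telescoping product of subgroup indices, taking care never to assume the (only computationally verified) identity $\lambda_k(G) = G \cap H_k$. Writing $m = 3i+j$ and setting $G \cap H_0 = G$ (recall $G \le H$), the inclusions $H_{k+1} \le H_k$ give a descending tower
$$ G = G \cap H_0 \ge G \cap H_1 \ge \cdots \ge G \cap H_m, $$
and I would invoke multiplicativity of the index in a tower of subgroups to write
$$ [G : (G \cap H_m)] = \prod_{k=0}^{m-1} [(G \cap H_k) : (G \cap H_{k+1})]. $$
Since this identity holds as cardinal arithmetic, no finiteness of the intermediate indices needs to be assumed in advance.

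The one step requiring care is a factor-by-factor lower bound. Because $H_{k+1}$ is normal in $H$, the subgroup $G \cap H_{k+1}$ is normal in $G \cap H_k$; and since $\lambda_k(G) \le G \cap H_k$ with $\lambda_k(G) \cap (G \cap H_{k+1}) = \lambda_k(G) \cap H_{k+1}$ (using $\lambda_k(G) \le G$), the second isomorphism theorem identifies the image of $\lambda_k(G)$ in $(G \cap H_k)/(G \cap H_{k+1})$ with $\lambda_k(G)/(\lambda_k(G) \cap H_{k+1})$. As this image is a subgroup of the quotient, I obtain
$$ [(G \cap H_k) : (G \cap H_{k+1})] \ge [\lambda_k(G) : (\lambda_k(G) \cap H_{k+1})], $$
and hence, setting $S(m) := \sum_{k=0}^{m-1} \log_2 [\lambda_k(G) : (\lambda_k(G) \cap H_{k+1})]$,
$$ [G : (G \cap H_m)] \ge 2^{S(m)}. $$

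It then remains to evaluate $S(m)$ from the already-known indices. By Remark \ref{gammabases01} the factors at $k=0$ and $k=1$ contribute exponents $2$ and $3$, and by Theorem \ref{gammabases} the factor at each $k \ge 2$ contributes exponent $3$ when $k \equiv 0,1 \bmod 3$ and exponent $2$ when $k \equiv 2 \bmod 3$. Consequently every full block $\{3t,3t+1,3t+2\}$ with $t \ge 1$ contributes $3+3+2 = 8$, while the initial block $\{0,1,2\}$ contributes $2+3+2 = 7$. Summing the blocks gives $S(3i) = 7 + 8(i-1) = 8i-1$ for $i \ge 1$, and adding the two possible leading partial-block exponents produces $S(3i+j) = 8i-1+\mu(j)$ with $\mu(0)=0$, $\mu(1)=3$, $\mu(2)=6$; the low cases $S(1)=2$ and $S(2)=5$ cover $(i,j)=(0,1),(0,2)$. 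This is precisely the claimed bound.

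I expect no serious obstacle here, which is consistent with the statement being labelled an immediate consequence: the content lies entirely in passing from the proven quotients $\lambda_k(G)/(\lambda_k(G)\cap H_{k+1})$ to the desired quotients $(G \cap H_k)/(G \cap H_{k+1})$ via the inclusion $\lambda_k(G) \le G \cap H_k$, which yields only an inequality and therefore only a lower bound, together with the routine bookkeeping of the $3$-periodic exponent pattern. The single point one must resist is replacing that inequality by equality, since $\lambda_k(G) = G \cap H_k$ is not established in this paper.
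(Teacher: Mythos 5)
Your proposal is correct and follows essentially the same route as the paper: a telescoping product $[G:(G\cap H_m)]=\prod_k [(G\cap H_k):(G\cap H_{k+1})]$, each factor bounded below by $[\lambda_k(G):(\lambda_k(G)\cap H_{k+1})]$ via the inclusion $\lambda_k(G)\le G\cap H_k$, followed by the $3$-periodic bookkeeping from Theorem \ref{gammabases} and Remark \ref{gammabases01}. Your explicit justification of the factor-wise inequality (second isomorphism theorem) and your insistence on keeping it an inequality rather than an equality are exactly the points the paper leaves implicit.
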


\begin{Proof}
  The estimates follow immediately from Theorem \ref{gammabases} and
  Remark \ref{gammabases01} via
  \begin{align*} 
  [ G &: (G \cap H_k) ]\\
    &= [ G : (G \cap H_1)] \cdot [ (G \cap H_1) : 
    (G \cap H_2) ] \cdots [ (G \cap H_{k-1}) : 
    (G \cap H_k) ] \\
    &\ge [ G : (G \cap H_1)] \cdot [ \lambda_1(G) : (\lambda_1(G) \cap H_2) ] 
    \cdots [ \lambda_{k-1}(G) \cap (\lambda_{k_1} \cap H_k) ]. 
\end{align*}
\end{Proof}

The rest of this section is devoted to the proof of Theorem \ref{gammabases}.

\begin{prop} \label{abc}
  We have
  \begin{equation} \label{Mipowers} 
  M_k(a,\dots)^2 = M_{2k+1}(c,\dots) 
  \end{equation}
  with $c \in \Ss$ satisfying $c(i) = a(i) a(i+k+1)$
  and
  \begin{equation} \label{Micomm} 
  [ M_k(a,\dots), M(b,\dots) ] = M_{k+1}(c,\dots), 
  \end{equation}
  with $c \in \Ss$ defined by $c(i) = a(i) b(i+k+1) - b(i)
  a(i+1)$ (where the indices $i,i+1,i+k+1$ are taken mod $3$).
\end{prop}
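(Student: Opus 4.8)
The plan is to write every element of $H$ as $I+N$, where $N$ is the strictly upper block-triangular part, and to exploit the elementary fact that the product of a block sitting on the $p$-th upper block-diagonal with a block on the $q$-th upper block-diagonal lands on the $(p+q)$-th upper block-diagonal. This \emph{degree bookkeeping} is the engine behind both formulas: it pins down exactly which products can reach a prescribed block-diagonal, and in the relevant cases it forces the defining sum to collapse to a single term.

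For the squaring formula \eqref{Mipowers}, write $M_k(a,\dots) = I + N$, where $N$ carries its first nonzero block-diagonal on the $(k+1)$-st upper diagonal. Since all entries lie in $\F_2$, we have $M_k(a,\dots)^2 = I + 2N + N^2 = I + N^2$; the vanishing of the cross term $2N$ in characteristic $2$ is precisely what pushes the answer two filtration steps down. The first nonzero block-diagonal of $N^2$ sits on the $(2k+2)$-nd upper diagonal, so $M_k(a,\dots)^2 \in H_{2k+1}$. To read off its leading block $c$, I compute the block in position $(r,r+2k+2)$ of $N^2 = \sum_s N_{r,s} N_{s,r+2k+2}$: the constraints $s-r \ge k+1$ and $(r+2k+2)-s \ge k+1$ force $s = r+k+1$, so the sum collapses to the single product $a(r)\,a(r+k+1)$, giving $c(i) = a(i)\,a(i+k+1)$ with indices read mod $3$ through the periodicity of the diagonals.

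For the commutator \eqref{Micomm}, set $M = M_k(a,\dots) = I + P$ and $L = M(b,\dots) = I + Q$, with $P$ leading on the $(k+1)$-st and $Q$ on the first upper diagonal. The key algebraic step is the exact relation
$$ [M,L] = M L M^{-1} L^{-1} = I + (PQ - QP)\, M^{-1} L^{-1}, $$
which follows at once from $ML - LM = PQ - QP$ (the opposite commutator convention gives the same leading block). Writing $M^{-1}L^{-1} = I + R$ with $R$ strictly upper (leading diagonal $\ge 1$), the correction $(PQ-QP)R$ first appears on the $(k+3)$-rd upper diagonal and so does not disturb the $(k+2)$-nd. Hence $[M,L] \in H_{k+1}$ and its leading block coincides with that of the matrix commutator $PQ-QP$. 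The same collapse of sums as before yields the $(r,r+k+2)$ block of $PQ$ as $a(r)\,b(r+k+1)$ (via $s=r+k+1$) and that of $QP$ as $b(r)\,a(r+1)$ (via $s=r+1$), whence $c(i) = a(i)b(i+k+1) - b(i)a(i+1)$.

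I expect the only real care to lie in this bookkeeping rather than in any deep idea: one must verify that the sums defining the target block-diagonal each reduce to a single term, so that the sub-leading diagonals of $P$, $Q$ and $N$ contribute nothing, and that the $3$-periodicity of the diagonals is tracked consistently so that the periodic indices $a(i+k+1)$, $b(i+k+1)$ and $a(i+1)$ emerge in the correct positions. The decisive structural inputs are the exact identity for $[M,L]$ and the use of characteristic $2$ in the squaring step; both deserve to be flagged explicitly, since they are what keep the two computations short and elementary.
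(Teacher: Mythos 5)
Your proof is correct, and it takes a genuinely different route from the paper's. For the commutator, the paper decomposes $M_k(a_1,a_2,\dots)$ into elementary block matrices $E_{m,n}(\alpha)$, proves a lemma computing the conjugate $M_0(b,\dots)^{-1}E_{m,n}(\alpha)M_0(b,\dots)$ entry by entry, sums these to obtain an explicit formula for the full conjugate $M_0(b,\dots)^{-1}M_k(a_1,a_2,\dots)M_0(b,\dots)=M_k(a_1,c_2,\dots)$, and then writes the commutator as the product of $M_k(a_1,a_2,\dots)^{-1}$ with this conjugate; because the inverse of $M_0(a_1,a_2,\dots)$ has a nontrivial second diagonal, this forces a case split between $k=0$ and $k\ge 1$. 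Your identity $[M,L]=I+(PQ-QP)M^{-1}L^{-1}$, combined with the block-diagonal degree count, bypasses the explicit conjugation, the explicit inverse, and the case distinction in one stroke, and your collapse-to-one-term computations of the leading blocks of $N^2$, $PQ$ and $QP$ are exactly right (as is your parenthetical remark about conventions: with the paper's convention one gets $M^{-1}L^{-1}ML=I+M^{-1}L^{-1}(PQ-QP)$, which has literally the same leading block, with no sign change). The one ingredient you use implicitly that deserves a word is that the square and the commutator are again of the form $M_j(\cdot,\dots)$ with $3$-periodic diagonals; this rests on the closure of $H$ under products and inverses, which the paper asserts as obvious. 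Your argument is shorter and uniform in $k$; what the paper's route buys in exchange is the explicit conjugation formula of its Corollary, though that formula is not used outside the proof of this proposition.
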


Since we are working in vector spaces over $\F_2$, there is no
difference between the expressions $\alpha + \beta$ and $\alpha -
\beta$, but we prefer to use minus signs such that the formulas would
also hold in vector spaces over other fields.

\begin{Proof}
  The identity \eqref{Mipowers} is easily checked. Equation
  \eqref{Micomm} requires considerably more work and is proved in the
  Appendix.
\end{Proof}

\medskip

Now, we introduce the following elements of $\Ss$:
{\small
\begin{align*}
& \alpha_1 =
   \begin{pmatrix} 0 & 0 & 0 & 0 & 1 & 1 & 0 & 1 & 0\\ 
                   0 & 1 & 0 & 1 & 0 & 0 & 0 & 0 & 1\\ 
                   1 & 1 & 1 & 0 & 0 & 0 & 0 & 1 & 0 \end{pmatrix},
& \beta_1 =
   \begin{pmatrix} 0 & 0 & 0 & 0 & 0 & 1 & 0 & 1 & 1\\ 
                   1 & 0 & 1 & 0 & 0 & 0 & 0 & 1 & 1\\ 
                   1 & 1 & 0 & 1 & 0 & 0 & 0 & 0 & 1\end{pmatrix},\\[.2cm]
& \gamma_1 = 
   \begin{pmatrix} 0 & 0 & 0 & 0 & 1 & 1 & 0 & 1 & 0\\ 
                   0 & 1 & 1 & 1 & 0 & 1 & 0 & 0 & 0\\ 
                   1 & 0 & 0 & 0 & 1 & 1 & 0 & 0 & 1\end{pmatrix},\\[.2cm]
& \alpha_2 =
   \begin{pmatrix} 0 & 0 & 0 & 0 & 1 & 0 & 0 & 0 & 1\\ 
                   0 & 0 & 1 & 0 & 1 & 0 & 1 & 0 & 0\\ 
                   1 & 1 & 0 & 0 & 1 & 1 & 1 & 0 & 0\end{pmatrix},
& \beta_2 =
   \begin{pmatrix} 0 & 0 & 0 & 0 & 0 & 0 & 0 & 0 & 0\\ 
                   0 & 1 & 1 & 0 & 1 & 1 & 0 & 1 & 1\\ 
                   0 & 1 & 0 & 0 & 1 & 0 & 0 & 1 & 0\end{pmatrix},\\[.2cm]
& \gamma_2 = 
   \begin{pmatrix} 0 & 0 & 0 & 0 & 1 & 1 & 0 & 1 & 0\\ 
                   1 & 0 & 0 & 0 & 1 & 0 & 1 & 1 & 1\\ 
                   1 & 1 & 1 & 0 & 0 & 0 & 0 & 1 & 0\end{pmatrix},\\[.2cm]
& \alpha_3 =
   \begin{pmatrix} 0 & 0 & 0 & 0 & 0 & 1 & 0 & 1 & 1\\ 
                   0 & 0 & 0 & 1 & 0 & 1 & 1 & 1 & 0\\ 
                   0 & 0 & 0 & 0 & 1 & 0 & 1 & 1 & 1\end{pmatrix},
& \beta_3 =
   \begin{pmatrix} 0 & 0 & 0 & 0 & 1 & 0 & 0 & 0 & 1\\ 
                   0 & 0 & 0 & 0 & 1 & 1 & 1 & 0 & 1\\ 
                   0 & 0 & 0 & 1 & 0 & 1 & 0 & 1 & 0\end{pmatrix}.
\end{align*}
}

\smallskip

Using Proposition \ref{abc} as well as $x_0 =
M(\alpha_1+\gamma_1,\dots)$ and $x_1 = M(\alpha_1,\dots)$, we obtain the
following commutator scheme by straightforward calculations:

\begin{prop} \label{commscheme}
  We have for every integer $k \ge 0$:
  {\small
  \begin{align*}
    &[M_{3k}(\alpha_1,\dots), x_0] = M_{3k+1}(\alpha_2,\dots),
    &[M_{3k}(\alpha_1,\dots), x_1] = M_{3k+1}(\Bz,\dots), \\
    &[M_{3k}(\beta_1,\dots), x_0] = M_{3k+1}(\beta_2+\gamma_2,\dots),
    &[M_{3k}(\beta_1,\dots), x_1] = M_{3k+1}(\beta_2,\dots), \\
    &[M_{3k}(\gamma_1,\dots), x_0] = M_{3k+1}(\alpha_2,\dots),
    &[M_{3k}(\gamma_1,\dots), x_1] = M_{3k+1}(\alpha_2,\dots),
  \end{align*}
  \begin{align*}
    &[M_{3k+1}(\alpha_2,\dots), x_0] = M_{3k+2}(\alpha_3,\dots),&
    &[M_{3k+1}(\alpha_2,\dots), x_1] = M_{3k+2}(\beta_3,\dots), \\
    &[M_{3k+1}(\beta_2,\dots), x_0] = M_{3k+2}(\Bz,\dots),&
    &[M_{3k+1}(\beta_2,\dots), x_1] = M_{3k+2}(\alpha_3,\dots), \\
    &[M_{3k+1}(\gamma_2,\dots), x_0] = M_{3k+2}(\beta_3,\dots),&
    &[M_{3k+1}(\gamma_2,\dots), x_1] = M_{3k+1}(\Bz,\dots),
  \end{align*}
  \begin{align*}
    &[M_{3k+2}(\alpha_3,\dots), x_0] = M_{3k+3}(\alpha_1,\dots),&
    &[M_{3k+2}(\alpha_3,\dots), x_1] = M_{3k+3}(\beta_1,\dots), \\
    &[M_{3k+2}(\beta_3,\dots), x_0] = M_{3k+3}(\beta_1,\dots),&
    &[M_{3k+2}(\beta_3,\dots), x_1] = M_{3k+3}(\gamma_1,\dots).
  \end{align*}
  }
\end{prop}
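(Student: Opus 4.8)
The plan is to derive all sixteen identities from the single commutator formula \eqref{Micomm} of Proposition \ref{abc}. The decisive feature of that formula is that the leading diagonal $c$ of the commutator $[M_m(a,\dots),M(b,\dots)] = M_{m+1}(c,\dots)$ is given by $c(i) = a(i)\,b(i+m+1) - b(i)\,a(i+1)$ (indices mod $3$), which depends \emph{only} on the leading diagonals $a$ and $b$ of the two factors and not on their deeper diagonals. Since each assertion of the proposition records precisely the leading diagonal of a commutator (the symbol $M_{m+1}(\,\cdot\,,\dots)$ fixes only that diagonal, while the trailing ``$\dots$'' is left unspecified on both sides of every equation), it suffices to compute, for each of the sixteen cases, the single element $c \in \Ss$ produced by the formula and to check that it agrees with the value claimed. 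The containment of each commutator in $H_{m+1}$ is automatic from \eqref{Micomm}.

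To set up the computation, recall that the second factor is always $x_0 = M(\alpha_1+\gamma_1,\dots)$ or $x_1 = M(\alpha_1,\dots)$, so its leading diagonal is $b = \alpha_1+\gamma_1$ or $b=\alpha_1$. Writing $m$ for the starting index of the first factor (so $m \in \{3k,3k+1,3k+2\}$), the index shift $i+m+1 \bmod 3$ in the formula is governed solely by $m \bmod 3$: the first block $m=3k$ gives shift $i+1$, the second $m=3k+1$ gives $i+2$, and the third $m=3k+2$ gives $i$. This is exactly what renders the scheme independent of $k$ and is the origin of the $3$-periodicity. For each identity I would substitute the leading diagonal $a \in \{\alpha_1,\beta_1,\gamma_1,\alpha_2,\beta_2,\gamma_2,\alpha_3,\beta_3\}$ of the first factor, the leading diagonal $b$ of $x_0$ or $x_1$, and the appropriate shift into the formula.

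For instance, the first identity $[M_{3k}(\alpha_1,\dots),x_0]$ yields, after using $m \equiv 0 \bmod 3$ and cancelling the two identical $\alpha_1(i)\alpha_1(i+1)$ terms over $\F_2$,
$$ c(i) = \alpha_1(i)\,\gamma_1(i+1) + \gamma_1(i)\,\alpha_1(i+1), $$
and one verifies by direct $3\times 3$ matrix multiplication over $\F_2$, for $i=1,2,3$, that $c(i)=\alpha_2(i)$. The remaining fifteen identities are handled identically: the composite value $\beta_2+\gamma_2$ and the vanishing cases $c=\Bz$ all emerge from the same substitution, and the fact that the eight basic diagonal types $\alpha_1,\dots,\beta_3$ form a closed system under commutation with $x_0,x_1$ is precisely the structural content that will feed into Theorem \ref{gammabases}. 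The only genuine ``obstacle'' is the sheer volume of elementary $\F_2$ matrix products---sixteen commutators, each evaluated at the three residues $i\in\{1,2,3\}$---but every individual computation is routine and mechanical once the formula and the explicit blocks of $\alpha_1,\beta_1,\dots,\beta_3$ are in hand.
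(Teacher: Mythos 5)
Your proposal is correct and matches the paper's own argument: the paper likewise derives all sixteen identities from formula \eqref{Micomm} of Proposition \ref{abc} together with $x_0 = M(\alpha_1+\gamma_1,\dots)$ and $x_1 = M(\alpha_1,\dots)$, calling the resulting $\F_2$ matrix computations ``straightforward calculations.'' Your observations that the leading diagonal of the commutator depends only on the leading diagonals of the factors and that the shift $i+m+1$ depends only on $m \bmod 3$ (the source of the $3$-periodicity) are exactly the points that make the scheme independent of $k$.
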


Note that this proposition can be used to calculate the $k$-th upper
diagonal $a \in \Ss$ of every $k$-fold commutator
$[x_{i_1},\dots,x_{i_k}] = M_k(a,\dots)$ with $i_1,\dots,i_k \in
\{0,1\}$. 

\medskip

To simplify notation, let $\Lambda_i := \lambda_i(G)$ and $L_{i+1} :=
\lambda_i(G) \cap H_{i+1}$ and $\Ss_1, \Ss_2, \Ss_3 \subset \Ss$ denote
the subspaces spanned by $\{ \alpha_1,\beta_1,\gamma_1 \}$, $\{
\alpha_2,\beta_2,\gamma_2 \}$ and $\{ \alpha_3,\beta_3 \}$,
respectively. Proposition \ref{abc} yields
\begin{equation} \label{x02x12x1x0}
x_0^2 = M_1(\beta_2,\dots), \quad x_1^2 = M_1(\gamma_2,\dots), \quad
\text{and}\ [x_1,x_0] = M_1(\alpha_2,\dots). 
\end{equation}
Since $M_1(a,\dots) \cdot M_1(b,\dots) = M_1(a+b,\dots)$ and
$M_1(a,\dots)^{-1} = M_1(-a,\dots)$, we conclude from
\eqref{x02x12x1x0} that $\Lambda_1 / L_2$ is spanned by $\alpha_2,
\beta_2, \gamma_2 \in \Ss$ (under the identification $a \mapsto M_1(a)
L_2$). $\alpha_2, \beta_2, \gamma_2$ are linear independent and,
therefore, $\Lambda_1 / L_2$ is $3$-dimensional and isomorphic to
$\Ss_2$.

Propositions \ref{abc} and \ref{commscheme} are the key ingredients
for the proof of Theorem \ref{gammabases}, which we carry out by
induction.

\bigskip

\noindent {\bf Proof of Theorem \ref{gammabases}:} We already know
that every element in $\Lambda_1$ is of the form $M_1(a,\dots)$ with
$a \in \Ss_2$. This is the begin of the induction.

\smallskip

Assume that we already know that every element in $\Lambda_{3k+1}$ is
of the form $M_{3k+1}(a,\dots)$ with $a \in \Ss_2$ for some $k \ge
0$. Using Propositions \ref{abc} and \ref{commscheme}, we conclude that
every element in $\Lambda_{3k+2}$ is of the form $M_{3k+2}(a,\dots)$
with $a \in \Ss_3$. Proposition \ref{commscheme} yields also that we
have
\begin{eqnarray*}
{}[x_1,\ {}_{3k-1} \ x_0] &=& M_{3k+2}(\alpha_3,\dots), \\
{}[x_1,\ {}_{3k-2} \ x_0,x_1] &=& M_{3k+2}(\beta_3,\dots).
\end{eqnarray*}
Since $\alpha_3,\beta_3$ span $\Ss_3$, we see that $\Lambda_{3k+2} /
L_{3k+3}$ is spanned by $[x_1,\ {}_{3k-1} \ x_0] L_{3k+3}$ and
$[x_1,\ {}_{3k-2} \ x_0,x_1] L_{3k+3}$. 

Repeating this reasoning twice, we obtain that $\Lambda_{3k+3}$ and
$\Lambda_{3k+4}$ contain only elements of the form $M_{3k+3}(a,\dots)$
and $M_{3k+4}(b,\dots)$ with $a \in \Ss_1$ and $b \in \Ss_2$, and that
the commutators given in the theorem are bases of the quotients
$\Lambda_{3k+3} / L_{3k+4}$ and $\Lambda_{3k+4} / L_{3k+5}$. This
completes the induction step $k \to k+1$ and thus the proof of the
theorem.\EPf

\section{Explicit construction of expanders}
\label{expanders}

The simplicial complex $\mathcal K$, introduced at the beginning of
Section \ref{main}, consists of $2$ vertices and $14$ triangular
faces. The link of each vertex is isomorphic to an incidence graph of
a finite projective plane of order $2$ (see \cite{V} for more details
of constructing polyhedra and determining their links). Using
\cite{BS} (see also \cite{Pa} or \cite{Z}) we conclude that the
fundamental group $G$ of $\mathcal K$ has property (T). We also
like to mention that the Kazdhan constants of the groups $\Gamma_\T$
of Desarguesian projective planes were {\em exactly} calculated in
\cite{CMS}.)

We choose the symmetric generating set $S := \{ x_0^{\pm 1}, x_1^{\pm
  1} \}$ of the group $G$. As explained, e.g., in
\cite[Prop. 3.3.1]{Lub2}, Kazdhan property (T) of $G$ implies that the
Cayley graphs of all quotients of finite index normal subgroups of $G$
(with respect to the set $S$) have a uniform positive lower bound on
their combinatorial Cheeger constants. Thus, any sequence of normal
subgroups with finite indices converging to infinity yields a family
of expanders. We choose the normal subgroups $N_i = G \cap H_i$. Note
that $[H : H_i]$ is a power of two, since the quotient $H/H_i$ can be
identified with the vector space $\Ss^i$ over $\F_2$ via the map
\begin{equation} \label{matident}
M(a_1,a_2,\dots) H_i \mapsto (a_1,a_2,\dots,a_i) \in \Ss^i. 
\end{equation}
This implies that the groups $N_i$ have finite indices in $G$ which
are, again, powers of $2$. We know from Corollary \ref{quotgroupest}
that these indices converge to infinity, so the corresponding Cayley
graphs $\G_i$ are expanders.

Let us now have a closer look at the explicit matrix models of the
quotients $G / N_i$, obtained via the identification
\eqref{matident}.  This identification induces a nonabelian group
structure on the space $\Ss^i$. In fact, the identity element is
$\Bz^i \in \Ss^i$ and we have
\begin{eqnarray*}
(a_1,a_2,\dots,a_i) \cdot (b_1,b_2,\dots,b_i) &=& (c_1,c_2,\dots,c_i),\\
(a_1,a_2,\dots,a_i)^{-1} &=& (d_1,d_2,\dots,d_i)
\end{eqnarray*}
with 
\begin{equation} \label{gmult}
c_j(k) = a_j(k) + b_j(k) + \sum_{s=1}^{j-1} a_s(k)b_{j-s}(k+s),
\end{equation}
where $k,k+s$ are taken mod $3$. For the coefficients $d_j(k)$, we 
obtain the recursion formulas $d_1(k) = -a_1(k)$ and
\begin{equation} \label{ginv}
d_j(k) = -a_j(k) - \sum_{s=1}^{j-1} a_s(k) d_{j-s}(k+s).
\end{equation}
The identification \eqref{matident} induces an embedding
$G / N_i \hookrightarrow \Ss^i$ and we denote the image of
the group $G / N_i$ in $\Ss^i$ by $K_i$. $K_i$ is generated by the images
of $x_0 N_i$ and $x_1 N_i$, which we denote by $v_0$ and $v_1$. Hence,
we have $v_0 = (a_1,\dots,a_5,\Bz,\dots) \in \Ss^i$ and $v_1 =
(b_1,\dots,b_5,\Bz,\dots) \in \Ss^i$ with $a_1, \dots, b_5$
defined in \eqref{x0} and \eqref{x1}. (If $ i < 5$, we set $v_0 =
(a_1,\dots,a_i)$ and $v_1 = (b_1,\dots,b_i)$.) The expanders $\G_i$ are the
Cayley graphs of $K_i$ with respect to $\{ v_0^{\pm 1}, v_1^{\pm 1} \}$.  
They are all regular graphs with vertex degree $4$ and 
\begin{equation} \label{tower}
\dots \G_i \to \G_{i-1} \to \dots \G_1 \to \G_0 
\end{equation}
is a tower of coverings. The covering indices of \eqref{tower} are
powers of $2$, since $[G : N_i]$ are powers of $2$. $\G_0$ and $\G_1$
are illustrated in Figure \ref{g01}.

\begin{figure}[h]      
      \psfrag{0}{{\small$\Bz$}}
      \psfrag{v0}{{\small$v_0$}}
      \psfrag{v1}{{\small$v_1$}}
      \psfrag{v0+v1}{{\small$v_0 v_1=v_1 v_0$}}
    \begin{center}
      \includegraphics[width=8cm]{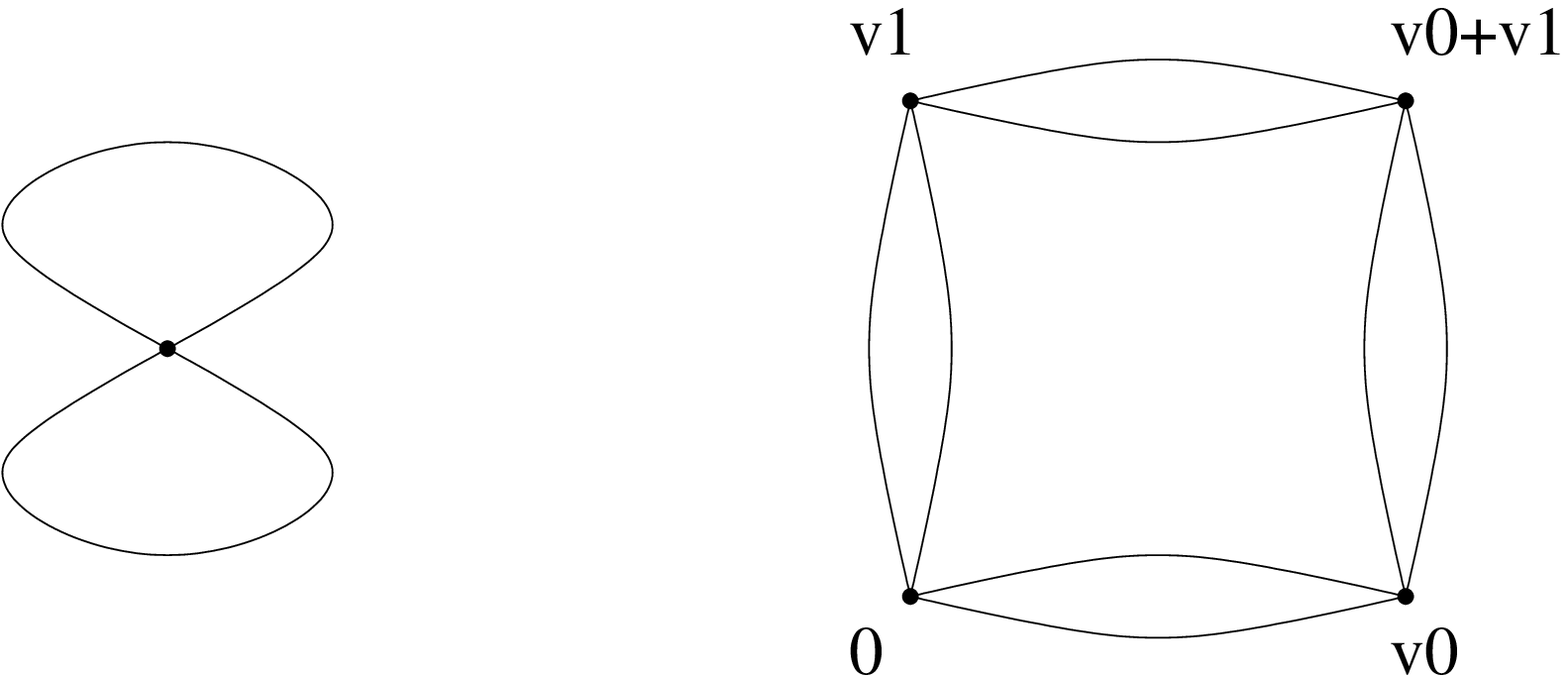}
    \end{center}
    \caption{The graphs $\G_0$ and $\G_1$}
    \label{g01}
\end{figure}

Let us briefly explain how to construct $\G_2$, a regular graph with
$2^5 = 32$ vertices. Let $v_0 = (a_1,a_2)$ and $v_1 = (b_1,b_2)$ be
the images of $x_0$ and $x_1$ in the group $K_2 \subset \Ss^2$ and
$w_1 = (\Bz,\alpha_2)$, $w_2 = (\Bz,\beta_2)$ and $w_3 =
(\Bz,\gamma_2)$. Note that $\Ss^2$ has both a vector space structure
and a nonabelian multiplicative group structure (given by
\eqref{gmult} and \eqref{ginv}). The elements of $K_2$ are given by
$$ K_2 := \bigoplus\limits_{v \in \{\Bz^2,v_0,v_1,v_0 \cdot v_1\}} v + \F_2 w_1 + 
\F_2 w_2 + \F_2 w_3 \subset \Ss^2, 
$$ 
and the center of $K_2$ is generated by $w_1, w_2, w_3$. Thus we have
$v \cdot w_i = w_i \cdot v = v + w_i$ for all $v \in K_2$. Using
$v_0^2=w_2, v_1^2 = w_3, [v_1,v_0] = w_1$ (which we compute with
\eqref{gmult} and \eqref{ginv} or we conclude it from
\eqref{x02x12x1x0}), we obtain
\begin{equation} \label{help} 
  v_1 \cdot v_0 = v_0 \cdot v_1 + w_1,
  \quad v_0^{-1} = v_0 + w_2, \quad \text{and}\ \ v_1^{-1} = v_1 +
  w_3.
\end{equation}
The vertices of $\G_2$ are the elements of $K_2$ and the
neighbours of a vertex $v \in \G_2$ are the vertices $v \cdot v_0^{\pm 1}$ and
$v \cdot v_1^{\pm 1}$.  These neighbours can all be calculated with
the help of \eqref{help}. The graph $\G_2$ is illustrated in Figure
\ref{g2} (where we use the abbreviations $w_{ij}$ and $w_{ijk}$ for
$w_i + w_j$ and $w_i + w_j + w_k$).

\begin{figure}[h]
      \psfrag{0}{{\small$\Bz^2$}}
      \psfrag{w1}{{\small$w_1$}}
      \psfrag{w2}{{\small$w_2$}}
      \psfrag{w3}{{\small$w_3$}}
      \psfrag{w12}{{\small$w_{12}$}}
      \psfrag{w13}{{\small$w_{13}$}}
      \psfrag{w23}{{\small$w_{23}$}}
      \psfrag{w123}{{\small$w_{123}$}}
      \psfrag{v0*v1}{{\small$v_0 v_1$}}
      \psfrag{v0*v1+w1}{{\small$v_0 v_1 +w_1$}}
      \psfrag{v0*v1+w2}{{\small$v_0 v_1 +w_2$}}
      \psfrag{v0*v1+w3}{{\small$v_0 v_1 +w_3$}}
      \psfrag{v0*v1+w12}{{\small$v_0 v_1 +w_{12}$}}
      \psfrag{v0*v1+w13}{{\small$v_0 v_1 +w_{13}$}}
      \psfrag{v0*v1+w23}{{\small$v_0 v_1 +w_{23}$}}
      \psfrag{v0*v1+w123}{{\small$v_0 v_1 +w_{123}$}}
      \psfrag{v0}{{\small$v_0$}}
      \psfrag{v0+w1}{{\small$v_0 +w_1$}}
      \psfrag{v0+w2}{{\small$v_0 +w_2$}}
      \psfrag{v0+w3}{{\small$v_0 +w_3$}}
      \psfrag{v0+w12}{{\small$v_0 +w_{12}$}}
      \psfrag{v0+w13}{{\small$v_0 +w_{13}$}}
      \psfrag{v0+w23}{{\small$v_0 +w_{23}$}}
      \psfrag{v0+w123}{{\small$v_0 +w_{123}$}}
      \psfrag{v1}{{\small$v_1$}}
      \psfrag{v1+w1}{{\small$v_1 +w_1$}}
      \psfrag{v1+w2}{{\small$v_1 +w_2$}}
      \psfrag{v1+w3}{{\small$v_1 +w_3$}}
      \psfrag{v1+w12}{{\small$v_1 +w_{12}$}}
      \psfrag{v1+w13}{{\small$v_1 +w_{13}$}}
      \psfrag{v1+w23}{{\small$v_1 +w_{23}$}}
      \psfrag{v1+w123}{{\small$v_1 +w_{123}$}}
    \begin{center}
      \includegraphics[width=12cm]{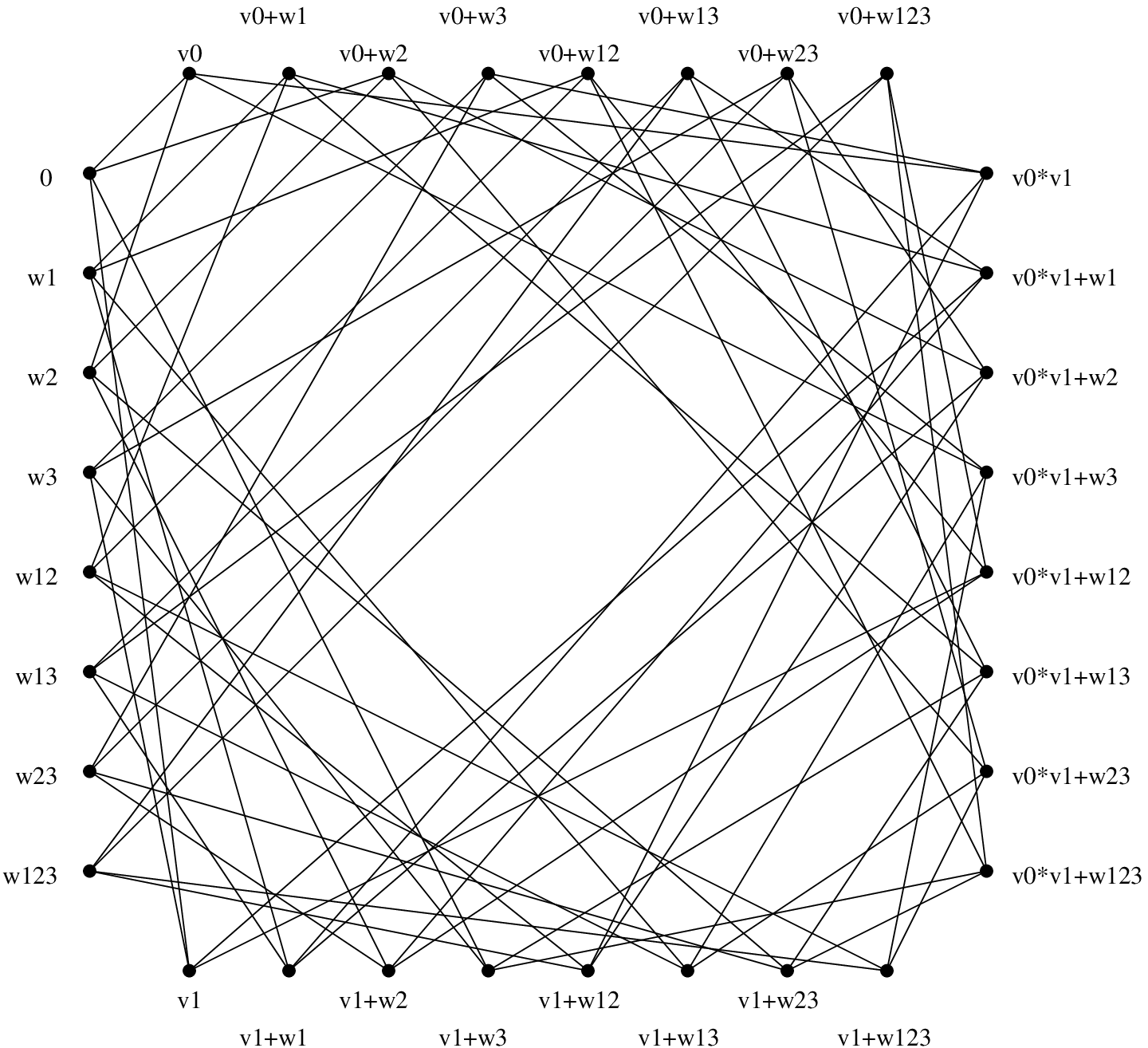}
    \end{center}
    \caption{The graph $\G_2$}
    \label{g2}
\end{figure}

Computer calculations with MAGMA show that the graph $\G_5$ with
$2^{13} = 8192$ vertices is the first graph which is {\em not
  Ramanujan}. Note also that we can fill into the tower of coverings
\eqref{tower} new intermediate covering graphs in order to obtain a
new tower of coverings
$$
\dots \widetilde \G_i \to \widetilde \G_{i-1} \to \dots \widetilde
\G_1 \to \widetilde \G_0,
$$ 
where the covering indices of two subsequent graphs {\em are exactly
  $2$}. This follows easily from the fact that every finite $2$-group
has a normal index $2$ subgroup. The graphs $\widetilde \G_i$ are
still expanders with the same positive lower bound on their
combinatorial Cheeger constants. This ``completed'' tower of $2$-fold
coverings fits well to results of Bilu and Linial \cite{BL}. They
present a construction of $2$-fold towers of covering graphs with
nearly optimal spectral gap. They also conjecture, based on extensive
numerical tests, that {\em every Ramanujan graph has a $2$-fold
  covering which is again Ramanujan}. If their conjecture is true,
there should be a different continuation of the sequence $\G_4 \to
\G_3 \to \dots \to \G_0$ by Ramanujan graphs.

The group $G$ can also be used to obtain another family of Cayley
graph expanders of minimal vertex degree $4$, presented in Theorem
\ref{expshort}, given by finite groups with two generators
and {\em only four relations}. 

\bigskip

\noindent {\bf Proof of Theorem \ref{expshort}:}
  We conclude from Proposition \ref{commscheme} that $[x_1,\ {}_k\
  x_0] = M_k(\alpha_{k+1},\dots)$ (taking $k$ in the index of
  $\alpha_{k=1}$ mod $3$), and hence this commutator never represents
  the identity in $G$. By the normal subgroup theorem (see, e.g.,
  \cite{Marg1}), we know that the group $G$ is just
  infinite. Consequently, all the groups $G_k$ are finite. The
  corresponding Cayley graphs are expanders because $G$ has Kazdhan
  property (T). Since $[x_1,\ {}_k \ x_0] \in G \cap H_k = N_k$, we
  conclude from Corollary \ref{quotgroupest} that
  $$ | G_{3i+j} | \ge [ G : N_{3i+j} ] \ge 2^{8i-1+\mu(j)}. $$ 
\EPf

\begin{rmk}
  Expanders are increasing families of finite graphs with a uniform
  positive lower bound on their {\em combinatorial Cheeger constants}
  (or {\em edge expansion ratios}). The combinatorial Cheeger
  constants for infinite regular tessellations $\G_{p,q}$ of the
  hyperbolic plane (i.e., every vertex is of degree $p$ and every face
  is a $q$-gon) was exactly calculated in \cite{HJL} and
  \cite{HiShi}. Lower bounds for more general planar tessellations (in
  terms of combinatorial curvature) were derived in \cite{KP}. It
  would be interesting to derive similar results for Euclidean and
  hyperbolic buildings and more general non-planar simplicial
  complexes. Note, however, that there is no simple relation between
  the Cheeger constants of infinite graphs and their finite quotients.
\end{rmk}

\section{Further properties of the group $G$ and its pro-$2$
  completion}
\label{outlook}

Let us now have a closer look at the pro-$2$ completion $\widehat G_2$
of our group $G$. Since $N_i = G \cap H_i$ are finite index normal
subgroups of $G$ with $[G : N_i ]$ equals powers of $2$ and $\cap_i
N_i = \{ e \}$, $G$ can be considered as a dense subgroup of $\widehat
G_2$. Moreover, by \cite[Lemma 2.1]{Lub1}, $\widehat G_2$ has a
minimal pro-$2$ presentation given by $\langle x_0,x_1 | r_1,r_2,r_3
\rangle$, with the relations $r_1,r_2,r_3$ defined in
\eqref{r1r2r3}. Consequently, every minimal presentation $\langle X |
R \rangle$ of $\widehat G_2$ satisfies the {\em Golod-Shafarevich
  inequality}
$$ |R| \ge \frac{|X|^2}{4} = 1, $$
since $\widehat G_2$ is not free. (Golod-Shafarevich theorem implies
that every presentation of a finite $p$-group with minimal number of
generators satisfies this inequality.) Further properties of the group
$\widehat G_2$ are given in the following theorem.

\begin{thm}
  The pro-$2$ completion $\widehat G_2$ of $G$ satisfies the
  Golod-Shafarevich inequality even though it is infinite. $\widehat
  G_2$ is not $2$-adic analytic and doesn't contain a free pro-$2$
  subgroup, but it does contain a free subgroup of rank two.
\end{thm}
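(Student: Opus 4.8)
The theorem bundles four assertions about $\widehat G_2$, and the plan is to dispatch each by leveraging the explicit matrix model and the $3$-periodicity established in Theorem \ref{gammabases}. The Golod--Shafarevich inequality $|R| \ge |X|^2/4 = 1$ with $|X| = 2$, $|R| = 3$ is already noted in the text preceding the statement, so the substance lies in the remaining three claims. First I would prove that $\widehat G_2$ is \emph{infinite}: this follows directly from Corollary \ref{quotgroupest}, since the indices $[G : (G \cap H_{3i+j})]$ grow without bound, and as $\cap_i N_i = \{e\}$ with each $[G:N_i]$ a power of $2$, the inverse limit $\widehat G_2 = \varprojlim G/N_i$ cannot be finite. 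One can also phrase this via the unbounded growth of $\dim_{\F_2} \lambda_i(G)/\lambda_{i+1}(G)$, which the commutator scheme of Proposition \ref{commscheme} controls explicitly.

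For the failure of $2$-adic analyticity, the natural route is through the growth rate of the lower exponent-$2$ series. A pro-$p$ group is $p$-adic analytic if and only if it is of finite rank, equivalently its associated graded Lie algebra has polynomially bounded dimensions and (by Lazard) the group has finite coclass-type growth. Here Theorem \ref{gammabases} gives $[\lambda_i(G) : (\lambda_i(G) \cap H_{i+1})] \in \{4,8\}$ for all large $i$, so each successive quotient $\lambda_i/\lambda_{i+1}$ has dimension at least $2$ (in fact $2$ or $3$) over $\F_2$. This means $\dim_{\F_2} \lambda_i(G)/\lambda_{i+1}(G)$ is bounded below by a positive constant for all $i$, so the dimensions do not decay to zero. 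A uniformly powerful (hence $p$-adic analytic) pro-$p$ group would force these dimensions to stabilize to the fixed rank and the quotients to become eventually abelian-like of bounded dimension in a way incompatible with the persistent $3$-periodic pattern of nontrivial new generators appearing at every level; I would extract the precise contradiction from the fact that the minimal number of generators needed at each stage does not drop, ruling out finite rank.

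The two subgroup statements are in tension and form the technical heart. That $\widehat G_2$ \emph{does} contain a free subgroup of rank two should follow from the cited implication of Wilson's conjecture together with the linear representation: $G$ acts on the Euclidean building of type $\tilde A_2$, and cocompact lattices in such buildings are known to contain nonabelian free subgroups (e.g. via ping-pong on the building or the Tits alternative for the relevant linear group over $\F_2(y)$), and this free subgroup embeds into $\widehat G_2$ through the dense inclusion $G \hookrightarrow \widehat G_2$. The genuinely delicate claim is that $\widehat G_2$ contains \emph{no} free pro-$2$ subgroup of rank two: here I would argue that a free pro-$2$ group has lower exponent-$2$ quotients growing like those of a free group, so its presence would force the dimensions $\dim_{\F_2}\lambda_i/\lambda_{i+1}$ to grow exponentially, whereas Theorem \ref{gammabases} pins them at the bounded $3$-periodic values $\{2,3\}$. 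I expect this last step to be the main obstacle, since one must show that the bounded-width behavior of the \emph{whole} group obstructs an embedded free pro-$2$ subgroup rather than merely its abelianization; making the transfer from the global series growth of $\widehat G_2$ to a constraint on an arbitrary closed subgroup is where the real work lies, and it is cleanest to invoke the known result (\cite[p.~224]{Zel}) that finite-width pro-$p$ groups cannot contain free pro-$p$ subgroups.
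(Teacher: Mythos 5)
Your reductions for ``infinite'' and for the Golod--Shafarevich inequality match the paper, and your Tits-alternative route to a free discrete subgroup of rank two (linearity over $\F_2(y)$ plus non-amenability) is a legitimate alternative to the paper's argument, which instead observes that the presentation \eqref{G2} of $\Gamma$ satisfies $C(3)$--$T(6)$ and quotes Edjvet--Howie \cite{EH}. However, the other two claims contain genuine gaps.

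For non-analyticity, your criterion does not work: having $\dim_{\F_2}\lambda_i/\lambda_{i+1}$ bounded below by a positive constant, or even exactly periodic, is perfectly compatible with finite rank --- $\Z_2^3$ has every lower $2$-series quotient of dimension exactly $3$ forever and is $2$-adic analytic, and more generally a uniform group of dimension $d$ has all these quotients of order exactly $p^d$. So ``the number of new generators never drops'' rules out nothing. The obstruction the paper actually uses is the behaviour of the \emph{power map}: by \eqref{Mipowers}, squaring sends level $k$ of the filtration to level $2k+1$, whence ${\widehat G_2}^{\,2^k}\subset \widehat G_2\cap H_{2^k-1}$ and, by Corollary \ref{quotgroupest}, $[\widehat G_2 : \overline{{\widehat G_2}^{\,2^k}}]\ge 2^{(2^k)}$. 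This is superpolynomial in $2^k$, contradicting the bound that finite rank would impose via \cite[Thm 3.16]{DdSMS}. Your proposal never touches the power structure, which is the whole point.

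For the absence of a free pro-$2$ subgroup, the ``known result'' you want to invoke --- that finite-width pro-$p$ groups contain no nonabelian free pro-$p$ subgroup --- is not what \cite[p.~224]{Zel} says (that reference proves the converse-flavoured statement that groups \emph{violating} Golod--Shafarevich \emph{do} contain free pro-$p$ subgroups), and as a general principle it is false: the Nottingham group has finite width yet contains every countably based pro-$p$ group as a closed subgroup, in particular nonabelian free pro-$p$ groups. The underlying difficulty you yourself flag --- transferring a global bound on $\dim_{\F_2}\lambda_i/\lambda_{i+1}$ to a constraint on an arbitrary closed subgroup of infinite index --- is not merely ``the real work''; it is impossible by this counterexample. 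The paper's proof avoids the issue entirely: $\widehat G_2$ is linear over the local field $\F_2((1/y))$, and by Barnea--Larsen \cite{BaL} a nonabelian free pro-$p$ group is not linear over a local field, so no such subgroup can embed.
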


\begin{proof}
  The statement concerning the Golod-Shafarevich inequality was
  already discussed before. Note that $\widehat G_2 \cap H_i$ is an
  infinite sequence of subgroups of finite index. Next we show that
  $\widehat G_2$ is not $2$-adic analytic: We conclude from
  \eqref{Mipowers} that
  $$ {\widehat G_2}^{\, 2^k} \subset \widehat G_2 \cap H_{2^k-1}. $$
  Corollary \ref{quotgroupest} implies that $[G : N_n] \ge
  2^{2n}$. Consequently, we have
  $$ 
  [\widehat G_2 : \overline{{\widehat G_2}^{\, 2^k}}] \ge [G :
  N_{2^k-1}] \ge 2^{(2^k)}.
  $$  
  By \cite[Thm 3.16]{DdSMS}, $\widehat G_2$ cannot be of finite rank and
  therefore not $2$-adic analytic. 

  The presentation of $\Gamma$ given \eqref{G2} satisfies the
  conditions $C(3)$ and $T(6)$. (In fact, $\Gamma$ is isomorphic to
  the group $G_3 = \langle x | r_3 \rangle$ in \cite[Ex. 3.3]{EH}.)
  Thus we conclude with \cite{EH} that $\Gamma$ (and, therefore, also
  $G$ and $\widehat G_2$) contains a free subgroup of rank two. On the
  other hand, $\widehat G_2$ cannot contain a free pro-$2$ subgroup
  since it is a linear group over a local field (see \cite{BaL}).
\end{proof}

Finally, let us state our conjectures which are based on
MAGMA-computer calculations.

\begin{conj} \label{conjgamma}
  Let $\lambda_i(G)$ and $\gamma_i(G)$ denote the groups in the lower
  exponent-$2$ series and the lower central series of $G$. Then we
  have
  $$ \lambda_i(G) = G \cap H_i \ \text{for $i \ge 1$},$$
  and
  $$ \lambda_i(G) / \lambda_{i+1}(G) \cong \gamma_i(G) / \gamma_{i+1}(G) \
  \text{for $i \ge 2$}.$$
\end{conj}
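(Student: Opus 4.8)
The plan is to prove both parts by induction on $i$, bootstrapping from Theorem \ref{gammabases}, which already pins down the \emph{leading diagonal} of each filtration step. Write $\lambda_i = \lambda_i(G)$ and $\gamma_i = \gamma_i(G)$, and recall the two depth facts that drive everything: commutators raise depth by exactly one (\eqref{Micomm}), while squares raise it much more, from $k$ to $2k+1$ (\eqref{Mipowers}).

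For the first identity $\lambda_i = G \cap H_i$, the inclusion $\lambda_i \le G \cap H_i$ is already known, so only the reverse is at stake. The base case $i=1$ is immediate from Remark \ref{gammabases01}, since $\lambda_1$ and $G \cap H_1$ both have index $4$ in $G$. For the inductive step I would assume $\lambda_i = G \cap H_i$; then $\lambda_i \cap H_{i+1} = G \cap H_{i+1}$, and since $\lambda_{i+1} \le \lambda_i \cap H_{i+1}$ always holds, the step is \emph{equivalent} to the saturation inclusion
\[
\lambda_i \cap H_{i+1} \le \lambda_{i+1}.
\]
To attack this, take $g \in \lambda_i$ with vanishing $i$-th diagonal, so that $g \in H_{i+1}$. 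Theorem \ref{gammabases} together with the commutator scheme of Proposition \ref{commscheme} identifies the leading $(i+1)$-st diagonal of $\lambda_{i+1}$, so one would try to multiply $g$ by a product of the basis commutators to cancel its $(i+1)$-st diagonal, pushing it into $H_{i+2}$, and then iterate into the deeper diagonals.

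For the second identity I would run the same commutator-scheme induction for the lower central series and compare. The key structural input is \eqref{Mipowers}: since squaring sends depth $i$ to depth $2i+1 > i+1$ for every $i \ge 1$, the square contributions in the lower exponent-$2$ step sit strictly below the leading $(i+1)$-st diagonal and therefore do not affect it. Consequently the exponent-$2$ step and the ordinary commutator step induce the \emph{same} map on leading diagonals, which is exactly what Proposition \ref{commscheme} computes. Establishing $\gamma_i \le G \cap H_i$ with the same leading-diagonal image as $\lambda_i$ would then force $\lambda_i/\lambda_{i+1}$ and $\gamma_i/\gamma_{i+1}$ to agree for $i \ge 2$; the shift from $i \ge 1$ to $i \ge 2$ reflects the single step at the bottom ($\lambda_1$ versus $\gamma_2$) where squares of generators, by \eqref{Mipowers} with $k=0$, do reach the leading diagonal.

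The main obstacle in both parts is the same: the commutator scheme controls only the leading diagonal cleanly, whereas the saturation inclusion requires \emph{simultaneous} control of \emph{all} deeper diagonals, and already the first cancellation step presupposes that the basis commutators exhaust the leading diagonal of $\lambda_i \cap H_{i+1}$ (equivalently, that the leading-diagonal images of $\lambda_{i+1}$ and of $G \cap H_{i+1}$ coincide), which is itself part of what must be shown. Peeling off one diagonal at a time merely reproduces the statement one level deeper, so a complete proof needs a uniform ``generation in depth'' argument — for instance an explicit family of words in commutators and squares of $\lambda_i$-elements (resp.\ $\gamma_i$-elements) realizing every prescribed lower diagonal. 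This non-leading behaviour is precisely the part that has so far only been verified by computer, up to $i = 100$, and I expect it to be where the real difficulty lies.
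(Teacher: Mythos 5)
There is a genuine gap here, and in fact the statement you are trying to prove is stated in the paper only as Conjecture \ref{conjgamma}: the authors offer no proof at all, only MAGMA verification up to $i = 100$. So there is no ``paper proof'' to compare against, and the question is whether your outline closes the gap the authors could not. It does not, and you say so yourself in your last paragraph. The crux is the saturation inclusion $\lambda_i(G) \cap H_{i+1} \le \lambda_{i+1}(G)$. Theorem \ref{gammabases} and Proposition \ref{commscheme} give you only the quotient $\lambda_i(G)/(\lambda_i(G) \cap H_{i+1})$, i.e.\ a description of which leading diagonals $\lambda_i(G)$ \emph{does} reach; they give no lower bound on what $\lambda_{i+1}(G)$ reaches inside $G \cap H_{i+1}$ beyond the listed basis commutators. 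Your cancellation step --- multiplying $g \in \lambda_i(G) \cap H_{i+1}$ by basis commutators of $\lambda_{i+1}(G)$ to kill its $(i+1)$-st diagonal --- presupposes that those commutators span the full image of $G \cap H_{i+1}$ on that diagonal, which is exactly the statement $\lambda_{i+1}(G) = G \cap H_{i+1}$ you are trying to prove; peeling off one diagonal reproduces the same assertion one level deeper, so the induction is circular rather than descending. A genuine proof would need a uniform ``generation in depth'' input (e.g.\ explicit words realizing arbitrary deep diagonals), which neither you nor the paper supplies.

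The second identity inherits the same problem: Theorem \ref{gammabases} controls $\lambda_i(G)/(\lambda_i(G)\cap H_{i+1})$, not $\lambda_i(G)/\lambda_{i+1}(G)$, and the two agree only if the first identity already holds. Your observation that squares raise depth from $k$ to $2k+1$ and hence do not contribute to the leading diagonal for $i \ge 1$ is correct and is the right heuristic for why $\lambda$- and $\gamma$-quotients should agree from $i \ge 2$ on, but it only compares leading diagonals, not the full quotients. In short: your outline is a reasonable reduction of the conjecture to its essential difficulty, but it is not a proof, and you have correctly located the obstruction that presumably led the authors to leave this as a conjecture.
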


If Conjecture \ref{conjgamma} is true then Theorem \ref{gammabases} is
still valid if we replace $\lambda_i(G)$ and $\lambda_i(G) \cap
H_{i+1}$ by $\gamma_i(G)$ and $\gamma_{i+1}(G)$, respectively, and,
consequently, the group $\Gamma$ is of finite width $3$ and of finite
average width $(3+3+2)/3 = 8/3$. Moreover, the covering indices of our
tower of expander graphs $\G_i$ are given by the periodic sequence
$4,8,\, \overline{4,8,8}$.

Computer calculations suggest that not only the group $\Gamma$ is of
finite width $3$, but also all groups $\Gamma_\T$ introduced in
\cite[Section 4]{CMSZ} and associated to prime powers $q = p^k$ with
primes $p \neq 3$ (we exclude $p=3$ to avoid torsion phenomena). Here
we expect the following statements to be true:

\begin{conj} \label{conjgt} Let $\Gamma = \Gamma_\T$ be one of the
  groups introduced in \cite[Section 4]{CMSZ}, associated to a prime
  power $q = p^3$ with $p \neq 3$. Then we have the following
  $3$-periodicity for the ranks of the abelian quotients
  $\gamma_i(\Gamma) / \gamma_{i+1}(\Gamma)$ of the lower central
  series for $i \ge 2$:
  $$
  \log_p [ \gamma_i(\Gamma) : \gamma_{i+1}(\Gamma) ] = \begin{cases}
    3, & \text{if
      $i \equiv 0,1 \mod 3$,} \\
    2, & \text{if $i \equiv 2 \mod 3$.} \end{cases}
  $$
\end{conj}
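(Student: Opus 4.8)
The plan is to transport the proof of Theorem~\ref{gammabases} from the group $G$ to the whole family $\Gamma_\T$, replacing the lower exponent-$2$ series by the lower central series and $\F_2$ by $\F_p$. First, I would produce for each $q$ a faithful linear model in the spirit of the one used here: the Cartwright--Steger/LSV constructions realize $\Gamma_\T$ inside $\mathrm{GL}_3$ of a local field with residue field $\F_q$, and, exactly as in \eqref{xrep}, a group element should be recordable as a finite-band upper triangular Toeplitz matrix whose $i$-th diagonal is an element $a_i$ of a fixed finite-dimensional $\F_p$-space $\Ss$ built from the cubic unramified data attached to $\F_q$. This yields a congruence-type filtration $\Gamma_\T \ge \Gamma_\T\cap H_1 \ge \Gamma_\T\cap H_2\ge\cdots$ whose successive quotients are $\F_p$-vector spaces via the analogue of \eqref{matident}.

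Next I would establish the generalizations of Propositions~\ref{abc} and~\ref{commscheme}. The power and commutator formulas \eqref{Mipowers} and \eqref{Micomm} should persist in the same shape: bracketing a matrix with leading diagonal in degree $k$ against a generator produces a matrix whose leading diagonal, in degree $k+1$, is the image under a fixed bilinear ``diagonal bracket'' on $\Ss$. Iterating this bracket from the degree-$1$ data of the generators should again cycle through subspaces $\Ss_1,\Ss_2,\Ss_3\subset\Ss$ of $\F_p$-dimensions $3,3,2$, and the induction carried out for Theorem~\ref{gammabases} would then give the claimed $3$-periodicity for the ranks of $(\Gamma_\T\cap H_i)/(\Gamma_\T\cap H_{i+1})$. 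At this stage one must also pin down precisely over which field the ranks are counted, i.e.\ the exact relation between $\log_p$ and the degree of the unramified extension, since this is what converts the $\F_q$-dimensions of the graded pieces into the stated numbers.

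The conceptual heart would be to organize everything through the associated graded Lie algebra $L=\bigoplus_i (\Gamma_\T\cap H_i)/(\Gamma_\T\cap H_{i+1})$ over $\F_p$, whose bracket is induced by the commutator and raises degree by one (Proposition~\ref{abc}). The graded dimensions $3,3,2$ summing to $8=\dim\mathfrak{sl}_3$ per period, together with the required exclusion $p\neq 3$, strongly suggest that $L$ is, in high degrees, the positive part of the principal $\Z/3$-graded loop algebra of $\mathfrak{sl}_3$ (type $A_2$): the period $3$ is the Coxeter number $h=3$ of $A_2$, the principal grading of $\mathfrak{sl}_3$ has graded dimensions $2,3,3$, which is exactly $3,3,2$ after the shift coming from the congruence indexing, and the total $8$ per period matches the conjectured average width. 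The hypothesis $p\neq 3=h$ is precisely what keeps the $p$-th power map compatible with this grading and rules out the torsion degeneration anticipated in the text; identifying $L$ with this principal-graded current algebra would prove the statement uniformly in $q$ and, at the same time, explain the numbers.

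The hardest part will be the passage from the congruence filtration to the lower central series. The rank count above controls $(\Gamma_\T\cap H_i)/(\Gamma_\T\cap H_{i+1})$, whereas the conjecture concerns $\gamma_i(\Gamma_\T)/\gamma_{i+1}(\Gamma_\T)$; matching the two is exactly the content (for $G$) of Conjecture~\ref{conjgamma} and amounts to proving $\gamma_i(\Gamma_\T)=\Gamma_\T\cap H_i$, i.e.\ that $L$ is generated as a Lie algebra by its degree-$1$ part with no spurious generators in higher degrees. For the single group $G$ this was only verified by computer up to $i=100$, so the real difficulty is to prove generation in degree $1$ intrinsically and, crucially, \emph{uniformly} over the infinite family of prime powers $q$, since explicit computation can only ever settle finitely many cases. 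I expect the cleanest route is to deduce surjectivity of the iterated diagonal bracket onto each $\Ss_j$ directly from the $A_2$ loop-algebra structure and to handle the $p$-power contributions separately using $p\neq 3$; establishing this surjectivity in a $q$-independent way is the crux of the argument.
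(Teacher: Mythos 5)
The statement you set out to prove appears in the paper only as Conjecture \ref{conjgt}; the authors give no proof, only MAGMA verification for finitely many prime powers and finitely many indices $i$. So there is no argument of theirs to compare yours against, and the question is whether your outline actually closes the problem. It does not. Your plan rests on three load-bearing steps, each asserted rather than established. First, the Toeplitz-matrix model and its $3$-periodic diagonal calculus (Propositions \ref{abc} and \ref{commscheme}) are derived in the paper from one explicit $9\times 9$ representation of one group over $\F_2(y)$, and the induction in Theorem \ref{gammabases} rests on explicitly computed leading diagonals $\alpha_j,\beta_j,\gamma_j$ of the generators and their iterated brackets. For a general $\Gamma_\T$ you would need not only the analogous representation but a $q$-independent reason why the iterated bracket starting from the generators sweeps out subspaces of dimensions exactly $3,3,2$ per period; in the paper this is a finite matrix computation special to $q=2$, and nothing in your sketch replaces it.

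Second --- and this is the gap you yourself flag --- the diagonal calculus only controls the congruence quotients $(\Gamma_\T\cap H_i)/(\Gamma_\T\cap H_{i+1})$, whereas the conjecture concerns $\gamma_i(\Gamma_\T)/\gamma_{i+1}(\Gamma_\T)$. Identifying the two is precisely the content of Conjecture \ref{conjgamma} (in its $\Gamma_\T$ form), which the authors could only verify by computer up to $i=100$ for the single group $G$; deferring it to ``surjectivity of the iterated diagonal bracket'' restates the problem rather than solving it, since a proper containment $\gamma_i(\Gamma_\T)\lneq \Gamma_\T\cap H_i$ at some stage would change the ranks. Your structural guess --- that the associated graded Lie algebra is the positive part of the loop algebra of $\mathfrak{sl}_3$ in the principal $\Z/3$-grading, whose graded dimensions are indeed $3,3,2$ with total $8$ per period --- is an attractive and genuinely more conceptual viewpoint than anything in the paper, and it may well be the right explanation of the numbers and of the exclusion $p\neq 3$; but as written it is a heuristic, with the restricted ($p$-th power) structure and the role of $p\neq 3$ left entirely conjectural. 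In short: a promising research program, but the two hard steps --- a uniform-in-$q$ computation of the graded pieces, and equality of the lower central series with the congruence filtration --- remain open, so the conjecture is not proved.
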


\section{Appendix}

This section is devoted to the proof of Proposition \ref{abc}. For any
$3 \times 3$ matrix $\alpha \in M(3,\F_2)$ and $m,n \in \Z$, we denote
by $E_{m,n}(\alpha)$ the infinite matrix, built up by $3 \times 3$
matrices, which vanishes everywhere expect for its $3 \times 3$ entry
at position $(m,n)$, which coincides with $\alpha$. ($m$ denotes the
$3 \times \infty$ row and $n$ denotes the $\infty \times 3$ column.)
Moreover, given an infinite matrix $A$, built up by $3 \times 3$
matrices, let $\pi_{m,n}(A)$ denote its $3 \times 3$ entry at position
$(m,n)$. Obviously, we have $\pi_{m,n}(E_{m,n}(\alpha)) = \alpha$. For
simplicity we sometimes denote $\pi_{m,n}(A)$ also by $A_{m,n}$.

\begin{lemma} \label{emn}
Let $m,n \ge 1$, $\alpha \in M(3,\F_2)$ and $b \in \Ss$. Then we have
\begin{eqnarray*}
  \pi_{m,n}( M_0(b,\dots)^{-1} E_{m,n}(\alpha) M_0(b,\dots) ) &=& \alpha, \\
  \pi_{m-1,n}( M_0(b,\dots)^{-1} E_{m,n}(\alpha) M_0(b,\dots) ) &=&
  - b(m-1) \cdot \alpha, \\
  \pi_{m,n+1}( M_0(b,\dots)^{-1} E_{m,n}(\alpha) M_0(b,\dots) ) &=&
  \alpha \cdot b(n),
\end{eqnarray*}
where we have taken $m$ and $n$ mod $3$ at the right hand side.
Moreover, we have at all positions $(m',n')$ with $m' > m$ or $n' < n$
$$ \pi_{m',n'}( M_0(b,\dots)^{-1} E_{m,n}(\alpha) M_0(b,\dots) ) = 0. $$
\end{lemma}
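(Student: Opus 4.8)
The plan is to reduce the conjugation to a single block product, exploiting that $E_{m,n}(\alpha)$ is supported at one block position. First I would write $P := M_0(b,\dots) = I + N$, where $I$ denotes the block-identity and $N$ is the strictly upper block-triangular band matrix whose first superdiagonal is the $3$-periodic sequence built from $b$; in block coordinates this means $P_{k,k} = I$ and $P_{k,k+1} = b(k)$ (indices mod $3$), with the suppressed ``$\dots$'' contributing only to the second superdiagonal and higher. Since $P$ is upper block-triangular with identity diagonal, it is invertible, and $Q := P^{-1}$ is again upper block-triangular with $Q_{k,k} = I$. Expanding $Q = I - N + N^2 - \cdots$ and observing that $N^2, N^3, \dots$ have vanishing first superdiagonal, the first superdiagonal of $Q$ is exactly $-N$, i.e.\ $Q_{k,k+1} = -b(k)$. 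One small point to handle with care here is that $P$ and $Q$ are genuinely infinite matrices, but they are band matrices and all products below are locally finite, so each block entry is a finite sum and the Neumann-type expansion for $Q$ is valid entrywise.

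The crux of the argument is a clean factorization. Computing $E_{m,n}(\alpha)P$ first: since $E_{m,n}(\alpha)$ has its only nonzero block $\alpha$ at position $(m,n)$, the product is supported entirely in block row $m$, with $\pi_{m,j}\bigl(E_{m,n}(\alpha)P\bigr) = \alpha\, P_{n,j}$ and all other block rows zero. Left-multiplying by $Q$, only block column $m$ of $Q$ can interact with this single surviving block row, which yields
\[
\pi_{i,j}\bigl(Q\,E_{m,n}(\alpha)\,P\bigr) = Q_{i,m}\,\alpha\,P_{n,j}
\]
for every pair $(i,j)$. Once this identity is in place, the lemma is essentially proved; everything remaining is reading off the relevant blocks of $P$ and $Q$.

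From here the four formulas follow directly, and this is also where it becomes transparent why the suppressed higher superdiagonals of $M_0(b,\dots)$ do not enter: the stated positions only call on the diagonal and first-superdiagonal blocks of $P$ and $Q$. For $(i,j) = (m,n)$ we get $Q_{m,m}\,\alpha\,P_{n,n} = \alpha$; for $(m-1,n)$ we get $Q_{m-1,m}\,\alpha\,P_{n,n} = -b(m-1)\,\alpha$; and for $(m,n+1)$ we get $Q_{m,m}\,\alpha\,P_{n,n+1} = \alpha\,b(n)$, with the indices $m-1$ and $n$ reduced mod $3$ precisely because the superdiagonals are $3$-periodic. For the vanishing statement I would specialize the factorization to $\pi_{m',n'} = Q_{m',m}\,\alpha\,P_{n,n'}$: if $m' > m$ then $Q_{m',m} = 0$ by upper block-triangularity of $Q$, while if $n' < n$ then $P_{n,n'} = 0$ by upper block-triangularity of $P$, so in either case the entry is zero. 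I do not expect a genuine obstacle in this lemma; the only thing demanding attention is justifying the factorization display together with the identification of the first superdiagonal of $Q$ as $-b$, after which the four cases are immediate.
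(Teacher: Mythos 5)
Your proof is correct and follows essentially the same route as the paper: both rest on the factorization $\pi_{i,j}\bigl(P^{-1}E_{m,n}(\alpha)P\bigr)=(P^{-1})_{i,m}\,\alpha\,P_{n,j}$ and then read off the diagonal and first-superdiagonal blocks of $P$ and $P^{-1}$. The only cosmetic difference is that you obtain the first superdiagonal of $P^{-1}$ via a Neumann expansion, whereas the paper simply records $M_0(b,\dots)^{-1}=M_0(-b,\dots)$.
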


\begin{Proof}
  A straightforward calculation shows $M_0(b,\dots)^{-1} =
  M_0(-b,\dots)$. Then we have
  $$ \pi_{m',n'}(A B C) = \sum_{i,j} A_{m',i} B_{i,j} C_{j,n'}, $$
  and in particular
  $$ \pi_{m',n'}(A E_{m,n}(\alpha) C) = A_{m',m} \alpha C_{n,n'}. $$
  The lemma follows now immediately from
  $$\pi_{m-1,m}(M_0(b,\dots)^{-1}) = -b(m-1), \quad
  \pi_{n,n+1}(M_0(b,\dots)) = b(n+1),$$
  and $\pi_{i,j}(M_0(b,\dots)^{\pm 1}) = 0$ for $j < i$.
\end{Proof}

\begin{cor} \label{M0k0}
  We have
  $$
  M_0(b,\dots)^{-1} M_k(a_1,a_2,\dots) M_0(b,\dots) = M_k(a_1,c_2,\dots)
  $$
  with $c_2(i) = a_2(i) - b(i)a_1(i+1) + a_1(i) b(i+k+1)$, where the
  indices $i,i+1,i+k+1$ are taken mod $3$.
\end{cor}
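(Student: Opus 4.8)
The plan is to expand $M_k(a_1,a_2,\dots)$ as a sum of elementary block matrices and to exploit that conjugation by $M_0(b,\dots)$ is linear, so that Lemma~\ref{emn} applies termwise. Since the $(k+j)$-th upper diagonal of $M_k(a_1,a_2,\dots)$ carries the $3$-periodic entries $a_j$, I would write
$$
M_k(a_1,a_2,\dots) = I + \sum_{j \ge 1} \sum_{m \ge 1} E_{m,\, m+k+j}\bigl(a_j(m)\bigr),
$$
with $a_j(m)$ read mod $3$, where $I$ denotes the block-identity matrix. Conjugating $I$ by $M_0(b,\dots)$ returns $I$, so it only remains to track the conjugates of the elementary pieces $E_{m,\,m+k+j}(a_j(m))$.

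First I would set up the diagonal bookkeeping: the block position $(m,n)$ lies on the $(n-m)$-th upper diagonal. By Lemma~\ref{emn}, conjugation of $E_{m,n}(\alpha)$ by $M_0(b,\dots)$ fixes the entry $\alpha$ at $(m,n)$ and produces, on the next higher diagonal, the entries $-b(m-1)\alpha$ at $(m-1,n)$ and $\alpha\,b(n)$ at $(m,n+1)$. Moreover, the vanishing at all positions with $m' > m$ or $n' < n$ exactly covers the diagonals strictly below $n-m$, so conjugation never lowers a diagonal. Since $M_k(a_1,a_2,\dots)$ has identity on the main diagonal and zeros on diagonals $1,\dots,k$, the conjugate inherits these, hence is again of the form $M_k(\,\cdot\,)$.

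Next I would read off the two relevant diagonals. The only source landing on the $(k+1)$-th diagonal is the $j=1$ term staying put, contributing $a_1(m)$ at $(m,m+k+1)$, so that diagonal is unchanged and equals $a_1$. For the $(k+2)$-th diagonal, at position $(i,i+k+2)$, precisely three sources contribute: the $j=2$ term $E_{i,i+k+2}(a_2(i))$ staying put gives $a_2(i)$; the up-left shift of $E_{i+1,i+k+2}(a_1(i+1))$ gives $-b(i)\,a_1(i+1)$; and the up-right shift of $E_{i,i+k+1}(a_1(i))$ gives $a_1(i)\,b(i+k+1)$. Summing yields
$$
c_2(i) = a_2(i) - b(i)\,a_1(i+1) + a_1(i)\,b(i+k+1),
$$
with indices mod $3$, which is the claimed formula.

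The routine but delicate part is the index arithmetic modulo $3$: one must check that the $3$-periodicity of $b$ and of the $a_j$ is compatible with the shifts $(m,n)\mapsto(m-1,n),(m,n+1)$, so that the resulting diagonals are again genuinely $3$-periodic and the entry at $(i,i+k+2)$ depends only on $i \bmod 3$. The main conceptual point to justify is that higher upper diagonals hidden in the ``$\dots$'' of $M_0(b,\dots)$ cannot affect $a_1$ or $c_2$: a nontrivial second or higher diagonal of $M_0(b,\dots)$ shifts entries by at least two diagonals, so to reach the $(k+2)$-th diagonal it would need a source on the $k$-th diagonal or lower, where $M_k$ vanishes. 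Hence only the first diagonal $b$ enters the formula, consistent with the statement.
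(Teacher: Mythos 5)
Your proof is correct and follows essentially the same route as the paper: decompose $M_k(a_1,a_2,\dots)$ as $I$ plus a sum of elementary block matrices $E_{m,m+k+l}(a_l(m))$, conjugate termwise via Lemma~\ref{emn}, and collect the three contributions to the $(k+2)$-nd block diagonal. Your extra remarks on the diagonal bookkeeping and on why the higher diagonals of $M_0(b,\dots)$ cannot interfere are sound and only make explicit what the paper leaves implicit.
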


\begin{Proof}
  Note that
  $$ M_k(a_1,a_2,\dots) = I + \sum_{m=1}^\infty \left( \sum_{l=1}^\infty
    E_{m,m+k+l}(a_l(m))\right), $$
  and consequently,
  $$ M_0(b,\dots)^{-1} M_k(a_1,a_2,\dots) M_0(b,\dots) = I + C, $$
  with
  $$
  C = \sum_{m,l=1}^\infty M_0(b,\dots)^{-1} E_{m,m+k+l}(a_l(m)) M_0(b,\dots).
  $$
  Lemma \ref{emn} implies that $I + C$ is of the type
  $M_k(c_1,c_2,\dots)$. Applying Lemma \ref{emn} again, we obtain
  the desired results for the entries $c_1(m)$ and $c_2(m)$ at
  the positions $(m,m+1)$ and $(m,m+2)$.
\end{Proof}

\medskip

\noindent
{\bf Proof of Proposition \ref{abc}:}
We distinguish the cases $k=0$ and $k \ge 1$:

\smallskip

\noindent
{\bf Case $k=0$:} One easily checks that
$$ M_0(a_1,a_2,\dots)^{-1} = M_0(-a_1,d_2,\dots) $$
with $d_2(i) = a_1(i) a_1(i+1) - a_2(i)$ and, using Corollary \ref{M0k0},
\begin{eqnarray*}
[M_0(a_1,a_2,\dots),M_0(b,\dots)] &=& M_0(-a_1,d_2,\dots)M_0(a_1,c_2,\dots)\\
&=& M_0(0,e_2,\dots) = M_1(e_2,\dots)
\end{eqnarray*}
with $e_2(i) = d_2(i) - a_1(i)a_1(i+1) + c_2(i)$. This yields
$$ e_2(i) = c_2(i) = a_2(i) = a_1(i)b(i+1) - b(i) a_1(i+1), $$
finishing this case.

\smallskip

\noindent
{\bf Case $k \ge 1$:} Now we have
$$ M_k(a_1,a_2,\dots)^{-1} = M_k(-a_1,-a_2,\dots) $$
and, using again Corollary \ref{M0k0},
\begin{eqnarray*}
[M_k(a_1,a_2,\dots),M_0(b,\dots)] &=& M_k(-a_1,-a_2,\dots)M_k(a_1,c_2,\dots)\\
&=& M_k(0,c_2-a_2,\dots) = M_{k+1}(c_2-a_2,\dots),
\end{eqnarray*}
where
$$ c_2(i) - a_2(i) = a_1(i) b(i+k+1) - b(i) a_1(i+1). $$
This settles the second case. {\hfill$\square$}

\end{document}